\newtheorem{TEO}{Theorem}[section]
\newtheorem{PROP}[TEO]{Proposition}
\newtheorem{DEF}[TEO]{Definition}
\newtheorem{COR}[TEO]{Corollary}
\newtheorem{EX}[TEO]{Example}
\newtheorem{REM}[TEO]{Remark}
\newcommand\Oh{{\mathcal O}}
\newcommand\sF{{\mathcal F}}
\newcommand\sG{{\mathcal G}}
\newcommand\sH{{\mathcal H}}
\newcommand\sI{{\mathcal I}}
\newcommand\sK{{\mathcal K}}
\newcommand\sL{{\mathcal L}}
\newcommand\sM{{\mathcal M}}
\newcommand\om{\omega}
\newcommand\dual{\mathrel{\raise3pt\hbox{$\underline{\mathrm{\thinspace d
\thinspace}}$}}}
\newcommand\iso{\cong}
\newcommand\into{\hookrightarrow}
\newcommand\C{\mathbb C}
\newcommand\proj{\mathbb P}
\newcommand\Ka{\mathbb K}
\newcommand\length{\operatorname{length}}
\newcommand\im{\operatorname{Im}}
\newcommand\Ann{\operatorname{Ann}}
\newcommand\Cliff{\operatorname{Cliff}}
\newcommand\gon{\operatorname{gon}}
\newcommand\Hom{\operatorname{Hom}}
\newcommand\sHom{\operatorname{{\mathcal H}{\it om}}}
\newcommand\Pic{\operatorname{Pic}}
\newcommand\Sing{\operatorname{Sing}}
\newcommand\cliff{\operatorname{Cliff}}
\newcommand\Syz{\operatorname{\Syz}}
\newenvironment{proof}[1][]{\noindent\textbf{Proof#1}.  }{{\hfill $\blacksquare$}}
\begin{document}

\title{ Clifford index for reduced  curves}
%\thanks{This research was partially supported  by Italian MIUR through PRIN 2010 project  ``Geometria delle variet\`a algebriche e dei loro spazi di moduli". }}
\author{Marco Franciosi}
\date{ }

\maketitle

\begin{abstract}
%Let $C$ be a  reduced curve.
We extend the notion of Clifford index  to reduced curves  with planar singularities  by considering 
rank 1 torsion free sheaves. We investigate the behaviour of the Clifford index with respect to the 
combinatorial properties   of the curve and we show that Green's conjecture holds
for  certain classes of curves given by  the union of two irreducible components.

\hfill\break	
{\bf keyword:} Algebraic curve,   Clifford index, Green's conjecture 

\hfill\break  {\bf Mathematics Subject Classification (2010)} 14H20,  14C20, 14H51
\end{abstract}

%\newpage

%\tableofcontents
\section{Introduction}
Clifford index for  smooth curves has been introduced by H. Martens in \cite{Martens1968}  (see also \cite{GL}),
and many authors  investigated its relation with the geometry of smooth curves. 
If $C$ is a smooth curve and $\sL$ is an invertible sheaf, then the Clifford index of $\sL$ is 
$ \Cliff(\sL)= \deg(\sL)- 2 h^0(C,\sL) +2$ and the Clifford index of $C$ is
$$\Cliff(C)=\min_{\sL\in \Pic(C)}\{ \deg(\sL)- 2 h^0(C,\sL) +2 \  : \  h^0(C,\sL)\geq 2, h^1(C,\sL)\geq 2\} .
 $$
For a smooth curve it is always $\Cliff(C)\geq 0$, with  equality holding  only for
hyperelliptic curves, $\Cliff(C)=1$ if and only if $C$ is trigonal or plane
quintic, and $\Cliff(C)= 2 $ if and only if $C$ is tetragonal, or plane sextic  (see \cite{Martens1982} for a  further analysis). 
Indeed  the Clifford index is intimately, but not completely,  related to the gonality since it is 
$\Cliff(C) +2 \leq \gon(C) \leq \Cliff(C)+3$ (see \cite{CM91}).

Caporaso in \cite{CAP}  studied the Clifford index of invertible sheaves on  semistable  curves finding  interesting connections with the combinatorial properties of the curve
and pointing out the problems that can arise if the curve has disconnecting nodes.
Tenni and the author  in \cite{FrTe1} proved a generalisation of Clifford's theorem for singular  curves, either reduced with planar singularities or lying on a smooth surface,  studying
rank one torsion free sheaves of the form  $ \sI_S \omega_C$, where $S$ is a zero dimensional scheme and  $\omega_C$ is the canonical sheaf.  

In this  paper
we  consider  reduced  curves with planar singularities (e.g., semistable curves)  and we study   nef torsion free sheaves of rank 1  whose degree is bounded from above by the degree of the
 canonical sheaf  $\omega_C$.  We recall that these curves are always Gorenstein.

Notice that, for a curve $C$  with many components the behaviour of the sections of a torsion free sheaf may be rather complicated,
 hence the Clifford index too.
 Nevertheless %under some natural assumptions on its connectedness,
  it is possible to find an estimate for the Clifford  index 
 and to  show some geometric  relations with the canonical ring of the curve.  Indeed,
 given a reduced curve $C= C_1 \cup \cdots \cup C_n$, 
% denoting as usual  by $\omega_C$  a canonical divisor of $C$, % in section 3
and a  rank 1 torsion free sheaves $\sF$ on $C$  such that 
\begin{equation}\label{conditionF}
  0 \leq \deg[\sF_{|C_i}] \leq \deg {\omega_C}_{|C_i} \ \ \ \forall C_i ,  \ i=1, \cdots, n \end{equation}
 we   %extend the notion of   Clifford  index of such sheaf  by    setting  
 set  $\Cliff(\sF):= \deg(\sF)- 2 h^0(C,\sF) +2 $. Taking in account all the sheaves that contributes to the Clifford index  (see  Definition \ref{CliffC})    
 %we restrict our attention to  those which satisfy $h^0 (\sF) \geq2, h^1(\sF)\geq 2$, i.e. 
we give the following definition  of Clifford index for a reduced curve $C$ 
$$
\begin{array}{rl}
 \Cliff(C): = \min\{ \Cliff(\sF) \ : &  \sF \mbox{ rank 1 torsion free sheaf s.t. } \\
 &  \sF  \text{
verifies } (\ref{conditionF}) \ ;   h^0(\sF) \geq 2,  \ h^1(\sF) \geq 2
 \}.
 \end{array}
$$
In Section  3   we prove that such minimum does exist and 
  we show 
 lower and upper bound for such index,  investigating  its relation with the combinatorial properties of the curves, in particular   {\em  $m$-connectedness}. 
Recall that 
 a curve  $C$    is {\em  $m$-connected}  if  for any proper  decomposition $C=A \cup B$, it is $A\cdot B   := \deg ({\omega_C}_{| B})- (2p_a(B)-2) 
  \geq m$ (cf. \cite[\S 3]{CFHR}). 
  
More precisely  we prove  firstly  that $ \Cliff(C)$  can be negative  if $C$ is not 4-connected,   
bounded from below by $ -n +1$, where $n$ is the number of irreducible components of $C$ (see Prop. \ref{lower_bound}).
We show also  that such bound is sharp providing examples given by  chains of curves (see Example \ref{chain})
and we provide an  example of a 3-connected curve  $C$ with canonical sheaf very ample but  $ \Cliff(C) = -1$ (see Example \ref{3con}).  
On the contrary  in  Thm. \ref{4-connected} we  show  that $ \Cliff(C) \geq 0$   if $C$ is 4-connected  and  in Thm.  \ref{clifford invertible} we prove that 
for every invertible sheaf $\sL$  it is $ \Cliff(\sL) \geq 0$,  independently from the connectedness of  the curve.
Finally  in Theorem  \ref{prop:Clifford}
  we show   the following  constraints  given by the numerical connectedness of $C$:  if $C$ does not contain rational components and  it is  $m$-connected but  $(m+1)$-disconnected
  (that is, there is a decomposition $C=A\cup B$ with $A\cdot B =m$)  then 
%
%Indeed in Proposition \ref{prop:Clifford} we obtain the following  upper bound for the Clifford  index of a $m$-connected curve $C$: 
$$   \Cliff(C) \leq \min  \big\{m-2, \big[\frac{p_a(C)-1}{2}\big] \big\}. $$
We remark that our results can still  be applied to irreducible curves with planar singularities. In particular for an irreducible curve $C$  it is
always $ \Cliff(C)\geq0$, with  equality holding iff $C$ is hyperelliptic.

\hfill\break
To show that the above introduced Clifford index ha a geometrical meaning  in Section \ref{green} we give a   proof  of Green's conjecture for
a  $m$-connected  curve obtained glueing together two smooth curves.  To be more precise
we consider a stable curve  $C=C_1\cup C_2$  given by the union of  an irreducible  smooth  general
  curve  $C_1$  of genus $g_1$  and an irreducible smooth curve $C_2$   of genus $g_2 $,
 meeting transversally in $m$ distinct points
 $\{x_1, \cdots, x_m\}$.   For such curve $ C$  if  $ 4 \leq m \leq   \frac{g_1 +1}{2}  $ and $g_2\geq 1$
then  we show that $\Cliff(C)=m-2$ and Green's conjecture holds for $C$, i.e.,  $\sK_{p,1}(C,\omega_C)=0$ iff $p\geq p_a(C)-\Cliff(C)-1$
  (where  $\sK_{p,1}(C, \omega_C)$ denotes  the $p$-th   Koszul  group  with value in $\omega_C$ (see  Green's paper \cite{Gr}).
This result is only a modest  novelty,  since it is based on fundamental  results of Voisin in \cite{V1,V2} 
and  Aprodu in  \cite{ap02}, but we hope  it should be helpful  %in order to investigate the role of numerically connectedness
in studying curves with many components, e.g.,  stable curves. 

A second application of our results can be found in the paper  \cite{Ar-Br}, where the authors,
in order to characterize  Brill-Noether-Petri curves,  analyze
 the Petri homomorphism for rank 2 vector bundles on a (not necessarily smooth)  curve $C$ 
 using some results on the Clifford index.

\subsection*{Acknowledgements} 
The author is grateful for support by the PRIN project  2015EYPTSB$\_$010  ``Geometry of Algebraic Varieties'' of italian MIUR. 

The author would like to thank Elisa Tenni for deep  and stimulating discussions on these arguments.

\section{Notation and preliminary results}

%\subsection{Notation}
 We work over  an algebraically closed field $\Ka$ of characteristic $ 0$.

Throughout this paper  $C=C_1\cup\cdots \cup C_n$ will denote a reduced  curve with planar singularities.
 The $C_{i}$'s  are    the
 irreducible components of $C$.

\hfill\break  A subcurve $B\subseteq C$ is a curve of the form $B=C_{i_1} \cup\cdots C_{i_k}$ with $\{ i_1, \ldots, i_k\} \subset \{1,\ldots, n\}$.
For every subcurve $B\subseteq C$
 $\om_B$  denotes the   canonical  sheaf of $B$ (see \cite{Ha}, Chap.~III, \S7), $K_B$ denotes a canonical divisor so that $\Oh_B(K_B) \iso \omega_B$ 
 and
$p_a(B)$ the arithmetic genus of $B$, $p_a(B)=1-\chi(\Oh_B)$.  

Notice that by our assumptions every $B\subseteq C$ is Gorenstein (i.e.,  $\omega_B$ is invertible.)

\hfill\break
A decomposition $C= A\cup B$ means $A=C_{j_1} \cup\cdots \cup C_{j_h}$,
$B=C_{i_1} \cup\cdots  \cup C_{i_k}$ such that $\{ j_1, \ldots, j_h\}  \cup \{ i_1, \ldots, i_k\}= \{1,\ldots, n\}$.

For a given decomposition $C=A\cup B$, 
we will use the  following standard exact sequences:
 \begin{equation}0 \to \Oh_A(-B) \to \Oh_C \to \Oh_B \to 0,  \end{equation}
 \begin{equation}\label{canonical splitting} 0 \to \omega_A  \to \omega_C \to {\omega_C} _{|B} \to 0. \end{equation}
where  %$\Oh_A(-B) \iso \Oh_A \otimes \Oh_X(-B)$ if $C$ is contained in a smooth surface  $X$ (cf.  \cite[Chapter 3]{miles}) and corresponds to
$ \Oh_A(-B) \iso \sI_{A\cap B} \cdot \Oh_A$.

 If $C= A\cup B$ is a decomposition of $C$ then 
 the intersection product $A\cdot B$  is defined as follows 
$$A \cdot B =
 \deg_B(\omega_C)- (2p_a(B)-2) . $$
  A
  curve $C$ is  {\em
$m$-con\-nected} if
$A \cdot B \geq m $
for every effective decomposition  $ C=A \cup B$  (cf.   \cite{CFHR} for a detailed analysis on Gorentein curves).
An $m$-connected curve   $C$ is  said to be $(m+1)$-disconnected if 
 there is a decomposition $C=A\cup B$ with $A\cdot B =m$. 

\hfill\break 
For a decomposition $C= A\cup B$ we will use frequently  the key formula (cf. \cite[Exercise V.1.3]{Ha})
 \begin{equation}\label{genere A+B}  p_a(C)= p_a(A)+p_a(B) + A\cdot B -1. \end{equation}

\hfill\break
Let $\sF$ be a rank one torsion free sheaf on $C$.   For every subcurve  $B\subseteq C$ the degree of $\sF$ on $B $
can be defined  by the formula  
 $ \deg \sF_{|B}=\chi(\sF_{|B})-\chi(\Oh_B) $. 
 A torsion free sheaf  $\sF$ is said to be nef  if $\deg \sF_{|B} \geq 0$ for every $B\subseteq C$.

 %Let $C=C_1\cup\cdots \cup C_n$. For each $i$ the natural inclusion map $\epsilon_i:\Ga_i \rightarrow C$ induces a map
%$\epsilon_i^{\ast}:\sF \rightarrow \sF_{|\Ga_i}$. We denote by  $d_i= \deg (\sF_{|\Ga_i}) = \deg_{\Gamma_i} \sF$ the degree of $\sF$ on each  irreducible component,
 %and by  ${\bf d}:=(d_1, ..., d_s)$  the
% {\em multidegree  of $\sF$ on $C$}.  If  $B $ is a subcurve of $C$, by $\mathbf
%d_{B}$  we mean the multidegree of $\sF_{|B}$.
%We remark that there exists a natural partial ordering given by the multidegree.

%  A torsion free sheaf  $\sF$ is said to be nef (numerically eventually free) if % for every $B\subseteq C$  it is $\deg \sF_{|B} \geq 0.$ 
%  If  $C= \bigcup_{i=1}^n C_i}$ it is equivalent to ask   $d_i \geq 0 $ for every $i$ . 

\hfill\break 
 A {\em cluster} $S$ of {\em degree}
$\deg S=r$ is a \hbox{$0$-dimensional} subscheme with
$\length\Oh_S=\dim_k\Oh_S=r$. % The multidegree of $Z$ is defined as the opposite of the multidegree of $\sI_Z$. We consider the empty set as the degree 0 cluster.
 A cluster $S \subset C$ is {\em subcanonical} if the space $H^0(C, \sI_S \omega_C)$ contains a generically invertible section, i.e. a section $s_0$ which does not vanish on any subcurve of $C$.
Equivalently $S$ is subcanonical if there exists an ejective map $\Oh_C \into \sI_S\omega_C$
(see \cite[\S 2.3]{FrTe1}).

Given a subcanonical cluster $S$, we define  
its  residual Cluster $S^{\ast}$  with respect to a generic invertible section $s_0 \in H^0(C, \omega_C)$      by 
  the following exact sequence
 $$\xymatrix{
 0  \ar[r]  &  \sHom(\sI_S \omega_C, \omega_C) \ar[r]^{\alpha}   & \sHom(\Oh_C, \om_C)  \ar[r]   & \Oh_{S^{\ast}}  \ar[r] & 0
 }$$
 where the the map  $\alpha$ is defined by $\alpha(\varphi): 1 \mapsto \varphi(s_0)$. See  \cite[Section  2]{FrTe1} for the definition and main properties.

 %\hfill\break Finally, a  curve  $C$ is {\em honestly hyperelliptic}  if there exists a finite morphism \mbox{$\psi\colon C\to\proj^1$} of degree $2$ (see    \cite[\S3]{CFHR} for a detailed treatment).

%\subsection{Preliminary results } %on the canonical divisor}
%Let  $\om_C$ be the   dualizing sheaf of $C$ and  $\omega_C$  a canonical divisor on $C$.
\hfill\break 
In the following theorem we summarise some basic results  proved in
 \cite{CFHR}   on the relations   of  $m$-connectedness with  the behaviour of the canonical sheaf  $\omega_C$.
   For a general treatment see  \cite[\S 2, \S 3]{CFHR} and \cite{CF}.

 \begin{TEO}\label{thm:curve} Let $C$ be a
Gorenstein curve,  and $\omega_{C}$ the   canonical sheaf  of $C$. Then
  \begin{enumerate}
	 \renewcommand\labelenumi{(\roman{enumi})}
	
 \item If $C$ is 1-connected then $H^{1}(C,\omega_C) \iso \Ka$.

 \item If $C$ is 2-connected and $C\not \iso \proj^{1}$ then $|\omega_{C}|$ is base point free. To be more precise,  $P$ is a base point for $|\omega_C|$ if and only if there exist a decomposition 
$C=C_1 \cup  C_2$ such that $C_1 \cdot C_2=1$ and  $P$ is a smooth point for each $C_i$ satisfying  ${\omega_{C}}_{|C_i}\cong {\omega}_{C_i}( P)$.

\item If $C$ is 3-connected and $C$ is not honestly hyperelliptic (i.e.,
there does not exist a finite
morphism $\psi\colon C\to\proj^1$ of degree $2$) then $\omega_{C}$ is very ample.
 \end{enumerate}
 \end{TEO}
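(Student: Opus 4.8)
The plan is to handle all three assertions uniformly through Serre duality on the Gorenstein curve $C$, which converts statements about the invertible sheaf $\omega_C$ into statements about $\Oh_C$, $\Oh_C(P)$ and $\Oh_C(P+Q)$, and then to feed the numerical hypotheses into the analysis by means of the restriction sequence (\ref{canonical splitting}) and the additivity formula (\ref{genere A+B}). Since $\omega_C$ is invertible, Serre duality gives $H^1(C,\sF)^\vee \iso H^0(C,\sHom(\sF,\omega_C))$ for the relevant sheaves $\sF$, and this is the engine of every step.

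For (i) I would argue directly: taking $\sF=\omega_C$ yields $H^1(C,\omega_C)^\vee \iso H^0(C,\sHom(\omega_C,\omega_C)) = H^0(C,\Oh_C)$. Now $1$-connectedness forces $C$ to be connected (a disconnecting decomposition would give intersection product $0$), and a reduced connected proper scheme over an algebraically closed field satisfies $H^0(C,\Oh_C)=\Ka$. Hence $h^1(\omega_C)=1$. This step is routine.

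For (ii) I would fix a point $P$ and use the sequence $0 \to \omega_C(-P) \to \omega_C \to \Ka(P) \to 0$, where $\Ka(P)$ is the skyscraper at $P$. Passing to cohomology, $P$ is a base point exactly when $H^0(\omega_C)\to \Ka(P)$ is the zero map; dualising the connecting homomorphism and using (i), this is equivalent to $h^0(\Oh_C(P))=2$ (one always has $h^0(\Oh_C(P))\geq h^0(\Oh_C)=1$). I would then show that a global section of $\Oh_C(P)$ independent from the tautological one produces a decomposition $C=C_1\cup C_2$: the associated rational function is constant on the components where it has no pole, and comparing the degrees of $\Oh_C(P)$ on the two sides through (\ref{genere A+B}) pins down the value of $C_1\cdot C_2$. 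Two-connectedness then forces $C_1\cdot C_2=1$ together with the stated local identity ${\omega_C}_{|C_i}\iso \omega_{C_i}(P)$; conversely such a configuration manifestly yields a base point. Treating $P$ at a singular point requires reinterpreting $\Oh_C(P)$ through $\sHom(\sI_P,\Oh_C)$, and identifying the exceptional decomposition sharply (rather than merely excluding it) is the delicate point.

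For (iii), very ampleness of $\omega_C$ amounts to separating every length-two subscheme $Z$ (two distinct points, or a tangent vector at a smooth point), i.e. $h^0(\omega_C(-Z))=h^0(\omega_C)-2$. The same duality computation shows this is equivalent to $h^0(\Oh_C(Z))=1$ for all such $Z$, while a failure $h^0(\Oh_C(Z))=2$ produces a pencil of degree $2$. Since $3$-connectedness implies $2$-connectedness, part (ii) guarantees this pencil is base point free, so it defines a finite degree-$2$ morphism $C\to\proj^1$ unless it forces a decomposition $C=C_1\cup C_2$ with $C_1\cdot C_2\leq 2$, which $3$-connectedness forbids; hence the only obstruction to very ampleness is that $C$ be honestly hyperelliptic. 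I expect the main obstacle throughout to be the bookkeeping on subcurves — turning ``an unexpected section exists'' into ``a decomposition with small $A\cdot B$ exists'' — which requires controlling on which components the sections vanish and tracking the degrees of the restricted sheaves via the exact sequences and (\ref{genere A+B}); as these statements are drawn from \cite{CFHR}, I would finally cross-check the numerical constants against the vanishing lemmas established there.
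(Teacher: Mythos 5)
First, a point of reference: the paper does not prove this theorem at all --- it is quoted from \cite{CFHR} (and \cite{CF}) with the proofs living entirely in those references, so your sketch is being measured against the arguments there. Your overall strategy --- Serre duality to trade statements about $\omega_C$ for sections of $\Oh_C$, $\sHom(\sI_P,\Oh_C)$ and $\sHom(\sI_Z,\Oh_C)$, followed by ``an unexpected section forces a decomposition with small $A\cdot B$'' --- is indeed the mechanism of \cite{CFHR}, so the outline is the right one. But two steps have genuine gaps. In (i), the theorem is stated for Gorenstein curves, which in \cite{CFHR} may be non-reduced (divisors on surfaces); for such curves $H^0(C,\Oh_C)=\Ka$ is \emph{not} a consequence of topological connectedness and is exactly where $1$-connectedness is used (one takes a non-constant $f\in H^0(\Oh_C)$, lets $B$ be the largest subcurve on which $f-c$ vanishes for a suitable value $c$, and derives a contradiction from $\deg \Oh_A(-B)=-A\cdot B<0$ on the complementary curve). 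Your appeal to reducedness proves a weaker statement than the one quoted, although it does suffice for the curves actually used in this paper.

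In (iii) there is a more serious slip: you claim the degree-$2$ pencil arising from $h^0(\sHom(\sI_Z,\Oh_C))=2$ ``is base point free by part (ii)''. Part (ii) is a statement about $|\omega_C|$ and says nothing about an abstract $g^1_2$; the base-point-freeness and finiteness of the degree-$2$ morphism must be proved separately, and this is where $3$-connectedness enters a second time (one shows that $\sHom(\sI_Z,\Oh_C)$ is invertible and globally generated, excluding the case where the two sections share a vanishing subcurve, which would yield a decomposition with $A\cdot B\le 2$). Relatedly, very ampleness requires separating \emph{all} degree-$2$ clusters, including those supported at singular points, not only pairs of points and tangent vectors at smooth points; your parenthetical list is too short, and the clusters at singularities are precisely those for which the ``$\Oh_C(Z)$'' bookkeeping must be done with $\sHom(\sI_Z,\Oh_C)$ rather than with divisors. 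Finally, in (ii) the passage from ``a section of $\Oh_C(P)$ independent of $1$'' to the precise decomposition with $C_1\cdot C_2=1$ and ${\omega_C}_{|C_i}\iso\omega_{C_i}(P)$ needs the same largest-vanishing-subcurve device (the function is constant on each component where it is regular, but the constants may differ from component to component); you flag this as the delicate point but do not carry it out, so the ``only if'' direction of (ii) is asserted rather than proved.
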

(cf.     \cite[Thm. 1.1,  Thm. 3.3,  Thm. 3.6]{CFHR} and   \cite[Proposition 2.4]{CF}).\\

\begin{REM} Notice that for reduced curves the above implications are actually equivalences. Indeed, $(i)$ is obvious; $(ii)$ follows from the fact that a disconnecting point is necessarily a base point for $|\omega_C|$; $(iii)$ follows since, given a decomposition $C=A\cup B$ with $A \cap B=\{P,Q\}$, then $|\omega_C|$ does not separate the 2 points. 
See also \cite[Proposition 2.4]{CF} for a detailed  analysis of the base points of $|\omega_C|$ on 2-disconnected curves.
\end{REM}

\section{Clifford index of reduced   curves}\label{section:Clifford}

\subsection{ Clifford index of rank 1 torsion free sheaves}
In this section we extend the notion of Clifford index taking in account  nef  rank 1
torsion free sheaves
 whose multidegree  is bounded from above by the degree of the canonical sheaf
$\omega_C$.

\begin{DEF}\label{def_index} Let $C=C_1\cup\cdots \cup C_n$ be a connected reduced  curve with planar singularities 
and let $\sF$ be a nef  rank 1
torsion free sheaf. 

The Clifford index of $\sF$  is %\marginpar{the cliff index as }in the following way  %in the usual way:
$$\Cliff(\sF):= \deg(\sF)- 2 h^0(C,\sF) +2 $$

\end{DEF}

%\begin{REM} A rank 1 torsion free sheaf $\sF$ is isomorphic to a sheaf  $\sI_S L$ where $S$ is 0-dimensional scheme  and  $L$ is an invertible sheaf. \end{REM}
First of all let us consider the case $\sF = \sI_S \omega_C$, where $S \subset C$ is a subcanonical cluster, i.e $S$ is a 0-dimensional scheme such that  $H^0(C, \sI_S \omega_C)$ contains a generically invertible section.
\begin{PROP}\label{lower_bound}Let $C=C_1\cup\cdots \cup C_n$ be a connected reduced  curve with planar singularities 
and let  $S$ be a subcanonical cluster.
 Then
 $$ \Cliff(\sI_S \omega_C) \geq - n +1 .$$
\end{PROP}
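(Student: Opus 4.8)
The plan is to turn the inequality into a bound on a multiplication map of sections. First I would rewrite $\Cliff(\sI_S\om_C)$ purely in cohomological terms. Writing $s=\deg S$ and using $\deg(\sI_S\om_C)=\deg\om_C-s=2p_a(C)-2-s$ together with $\chi(\sI_S\om_C)=\chi(\om_C)-s$, one gets $h^0(\sI_S\om_C)-h^1(\sI_S\om_C)=p_a(C)-1-s$. Since $C$ is connected, Theorem~\ref{thm:curve}(i) and Serre duality give $h^0(\om_C)=p_a(C)$ and $h^1(\om_C)=1$, and a short computation yields
\[
\Cliff(\sI_S\om_C)=p_a(C)+1-\big(h^0(\sI_S\om_C)+h^1(\sI_S\om_C)\big).
\]
Thus the assertion $\Cliff(\sI_S\om_C)\ge -n+1$ is equivalent to the single inequality $h^0(\sI_S\om_C)+h^1(\sI_S\om_C)\le p_a(C)+n$, which is what I would aim to prove.

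Next I would interpret $h^1$ via Serre duality on the Gorenstein curve $C$: $h^1(\sI_S\om_C)=h^0\big(\sHom(\sI_S\om_C,\om_C)\big)$. The residual cluster sequence identifies $\sHom(\sI_S\om_C,\om_C)$ with $\sI_{S^{\ast}}\om_C$, the map $\varphi\mapsto\varphi(s_0)$ realising it as the subsheaf of $\om_C$ of sections vanishing on $S^{\ast}$; moreover $s_0$ itself vanishes on $S^{\ast}$, so it is a generically invertible section of $\sI_{S^{\ast}}\om_C$ and $S^{\ast}$ is again subcanonical. The evaluation pairing $\sI_S\om_C\otimes\sI_{S^{\ast}}\om_C\to\om_C$, which can be written concretely as $\sigma\otimes\tau\mapsto \sigma\tau/s_0$, then induces a multiplication map
\[
\mu\colon H^0(\sI_S\om_C)\otimes H^0(\sI_{S^{\ast}}\om_C)\longrightarrow H^0(\om_C).
\]
Since $\im\mu\subseteq H^0(\om_C)$ we have $\dim\im\mu\le h^0(\om_C)=p_a(C)$, and $h^0(\sI_{S^{\ast}}\om_C)=h^1(\sI_S\om_C)$; so it suffices to bound $\dim\im\mu$ from below by $h^0(\sI_S\om_C)+h^0(\sI_{S^{\ast}}\om_C)-n$.

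The heart of the argument is this lower bound, a generalised Hopf lemma. The key geometric input is that each component $C_i$ is integral: after restriction to $C_i$ the pairing is generically an isomorphism, so the product of two sections that are both nonzero on $C_i$ is again nonzero. Hence $\mu(\sigma,\tau)=0$ if and only if, for every $i$, at least one of $\sigma,\tau$ restricts to zero on $C_i$, i.e.\ $\sigma$ and $\tau$ have disjoint supports among the components. I would then prove, by induction on the number $n$ of components, that a bilinear map with this ``support'' vanishing pattern satisfies $\dim\im\mu\ge \dim V+\dim W-n$; the base case $n=1$ is the classical Hopf lemma (no zero divisors, defect $1$), and the inductive step peels off one component, restricting sections to the remaining subcurve and treating separately the products supported on the removed component. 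Combining $\dim\im\mu\ge h^0(\sI_S\om_C)+h^0(\sI_{S^{\ast}}\om_C)-n$ with $\dim\im\mu\le p_a(C)$ gives $h^0(\sI_S\om_C)+h^1(\sI_S\om_C)\le p_a(C)+n$, hence the Proposition.

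The step I expect to be the main obstacle is precisely this generalised Hopf lemma: one must show that the failure of the classical no-zero-divisor hypothesis, caused by sections living on complementary unions of components, costs at most one dimension per component, so that the total defect is exactly $n$ rather than something larger. Making the inductive bookkeeping match this optimal count of $n$ is where the real work lies, and it is consistent with the bound being sharp, as the chains of curves in Example~\ref{chain} should saturate it.
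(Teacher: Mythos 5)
Your cohomological reformulation is correct: Riemann--Roch gives $\Cliff(\sI_S\om_C)=p_a(C)+1-h^0(\sI_S\om_C)-h^1(\sI_S\om_C)$, so the claim is equivalent to $h^0(\sI_S\om_C)+h^0(\sI_{S^{\ast}}\om_C)\le p_a(C)+n$, and the evaluation pairing $\mu$ is well defined with the component-wise vanishing pattern you describe. The problem is that the entire content of the Proposition is now concentrated in your ``generalised Hopf lemma'', which you do not prove; worse, as a statement about an abstract bilinear map with the stated support--vanishing pattern it is \emph{false}. If every section of $V$ is supported on $C_1$ and every section of $W$ is supported on $C_2$, then all pairs have disjoint supports, $\mu\equiv 0$ is consistent with the pattern, and $\dim V+\dim W$ can be arbitrarily large, so no bound of the form $\dim\im\mu\ge\dim V+\dim W-n$ can hold without extra input. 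The extra input must be the subcanonicity of $S$ and $S^{\ast}$, i.e.\ the presence of generically invertible sections in both $H^0(\sI_S\om_C)$ and $H^0(\sI_{S^{\ast}}\om_C)$; but by itself this only yields $\dim\im\mu\ge\max\{h^0,h^1\}$ (inject $W$ via multiplication by a generically invertible $v_0$), which is too weak, and --- more seriously --- this hypothesis is not stable under your inductive step. When you peel off $C_n$ and restrict the subspace $\{v:\ v_{|C_n}=0\}$ to $A=C_1\cup\cdots\cup C_{n-1}$, there is no reason for the resulting space to contain a section that is nonzero on every component of $A$ (in the chain of Example \ref{chain} it typically does not), so the inductive hypothesis cannot be invoked in the form you state it. Until the induction is formulated with hypotheses that both suffice and propagate, the argument is not complete.

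For comparison, the paper avoids the multiplication map entirely: it inducts on $n$ by splitting $C=A\cup B$ only at a disconnecting point $P$ (a decomposition with $A\cdot B=1$), quotes \cite[Thm.~3.8]{FrTe1} for the irreducible and $2$-connected base cases, arranges $P\cap S\neq\emptyset$ via the residual cluster exactly as you do, and then reads the bound off the Mayer--Vietoris sequence, using \cite[Lemma~2.19]{FrTe1} to trade surjectivity of $H^0(A,\sI_{T_A}\om_A)\to\Oh_P$ against one unit of Clifford index on $A$. That case analysis is precisely where the ``lose at most one per extra component'' bookkeeping comes from, and it is the step your Hopf-type lemma would have to replace.
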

\begin{proof}We argue by induction on the number of irreducible components $n$.

If the curve $C$ is irreducible or reducible and 2-connected it is a straightforward consequence of \cite[Thm. 3.8]{FrTe1}.

If $C$ is connected but 2-disconnected, then we may take a decomposition $C=A\cup B$, with A, B connected curves such that  $A\cdot B=1$, i.e.
$A\cap B =  \{P\}$  a point  which is smooth for both.
 Let  $n_A$  be the number of irreducible components of $A$ and  $n_B$ be  the number of irreducible components of  $B$, 
so that $n=n_A+n_B$.

Let $S$ be a subcanonical cluster, i.e., assume that $H^0(C, \sI_S \omega_C)$ contains  a section $s_0$ which does not vanish on any subcurve of $C$
  % and let $S_A =  S\cap A$, $S_B=S\cap B$. 
  and 
 consider  the intersection point  $P$. Notice that 
  $P$ is a smooth point for both curves and it  is a base point for the system $|\omega_C|$ by Theorem  \ref{thm:curve}. 
Without loss of generality we may assume  that $P \cap S \neq \emptyset$. Indeed, if this is not the case, we may consider a residual cluster  $S^{\ast} \in H^0(C, \sI_{S} \omega_C)$  with respect to $s_0$ (see \S 2). Since $P$ is a base point for $|\omega_C|$, then  $P$ must intersect either $S$ or $S^{\ast}$. Serre duality implies that  the Clifford index of $S$ and $S^{\ast}$ coincide (see \cite[Remark 2.13]{FrTe1}), thus we may work with the cluster  which contains $P$.

Since $P$ is a smooth point for both the curves and $C$ has planar singularities, we have the isomorphisms of invertible sheaves $  {\omega_C}_{| A} \cong \omega_A(P)$ and    ${\omega_C}_{| B}\cong \omega_B(P)$.   Whence, being $P \cap S \neq \emptyset$,   there exists a cluster $T_A $ on $A$, resp.  a cluster  $T_B$ on $B$,  such that $ (\sI_S \omega_C)_{| A} \cong {\sI}_{T_A} \omega_A$, resp. 
$ (\sI_S   \omega_C)_{| B} \cong {\sI}_{T_B} \omega_B$.  Notice that  $\deg(\sI_{T_A} \omega_A) + \deg(\sI_{T_B}  \omega_B) = \deg(\sI_S  \omega_C)$. 
 Moreover they are  
 subcanonical, since a generically invertible section in $H^0(C, \sI_{S} \omega_C)$ restricts to a generically invertible section in $H^0(A, \sI_{S} {\omega_C}_{|A}) = H^0({\sI}_{T_A} \omega_A)$,  and  similarly on $B$.
 % Notice that by adjuction it is   $\deg (T_A) = \deg (S \cap A) -1$, $\deg (T_B) = \deg (S \cap B) -1$. 
 Therefore by induction we may assume  $ \Cliff(\sI_{T_A} \omega_A) \geq -n_A +1$ and $  \Cliff(\sI_{T_B} \omega_B)  \geq   -n_B  +1.$
 
 Consider now  the Mayer-Vietoris sequence
\begin{equation}\label{MV}0 \to \sI_S \omega_C  \to  \begin{array}{c}  {\sI}_{T_A}\omega_A \\ \oplus \\  {\sI}_{T_B}\omega_B  \end{array}
   \to \Oh_P \to 0 .  \end{equation}
   
   Firstly assume  that 
   $H^0(A, {\sI}_{T_A}\omega_A)  \oplus  H^0(B, {\sI}_{T_B}\omega_B)  \to \Oh_P $  is onto.  Notice that  this holds  when 
   $\Cliff(\sI_{T_A}  \omega_A)$  is  minimum  since 
    by  \cite[Lemma 2.19]{FrTe1} the restriction map   $H^0(A, {\sI}_{T_A}\omega_A)  \to \Oh_P $ is surjective. 
      In this case it is $ h^0(C,  \sI_S \omega_C) = h^0(A, {\sI}_{T_A}\omega_A) + H^0(B, {\sI}_{T_B}\omega_B) -1$, hence 
    a straightforward computation yields  $ \Cliff(\sI_S \omega_C) \geq  \Cliff(\sI_{T_A}  \omega_A) + \Cliff(\sI_{T_B}  \omega_B) $.  Therefore, by induction we have 
    $$ \Cliff(\sI_S  \omega_C) \geq \Cliff(\sI_{T_A}  \omega_A) + \Cliff(\sI_{T_B}  \omega_B)  \geq  -n_A -n_B  +2 = -n+2$$    
   
   If the above map (\ref{MV})  is not surjective on global sections then  in particular $H^0(A, {\sI}_{T_A}\omega_A)  \to \Oh_P $ is not onto and by  \cite[Lemma 2.19]{FrTe1}
   $\Cliff(\sI_{T_A}  \omega_A)$ is not minimum, i.e.,  by induction we may assume $ \Cliff(\sI_{T_A}  \omega_A) \geq -n_A  +2 $. In this case 
  it is   $ h^0(C,  \sI_S \omega_C) = h^0(A, {\sI}_{T_A}\omega_A) + H^0(B, {\sI}_{T_B}\omega_B) $, hence 
 $ \Cliff(\sI_S  \omega_C) \geq  \Cliff(\sI_{T_A}  \omega_A) + \Cliff(\sI_{T_B}  \omega_B) -2 $
   and  we get $$ \Cliff(\sI_{S}  \omega_C)\geq \Cliff(\sI_{T_A}  \omega_A) + \Cliff(\sI_{T_B}  \omega_B)  -2  \geq -n_A+2  -n_B+1 -2 =  -n+1.$$
\end{proof}

\begin{TEO}\label{cliff=}
Let $C=C_1\cup\cdots \cup C_n$ be a connected reduced  curve with planar singularities.
 Then  the following numbers exist and coincide:
\vspace{ 2 mm}
%\begin{enumerate}

$\begin{array}{ll}
\hspace{ - 8 mm} {\bf (1)} \ \  \min \{ \Cliff(\sF) \ : &  \sF \mbox{ rank 1 torsion free sheaf s.t. } \\
 & 0 \leq \deg[\sF_{|C_i}] \leq \deg {\omega_C}_{|C_i} \   \text{
 for  } \ i=1, \cdots, n ; \\ &  h^0(\sF) \geq 2,  \ h^1(\sF) \geq 2
 \}
 \end{array}$

$\begin{array}{ll}
\hspace{ - 8 mm} {\bf (2)} \ \ \min \{\Cliff(\sI_S \omega_C)   \ : & \  S \subset C  \text{ subcanonical  cluster s.t.  }   \\ 

& h^0(C,\sI_S \omega_C) \geq 2,  \ h^1(C, \sI_S \omega_C) \geq 2\}
 \end{array}$

\end{TEO}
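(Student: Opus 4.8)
The plan is to read both quantities as infima over families of sheaves and to compare the two families directly. Write $m_1$ and $m_2$ for the infima in \textbf{(1)} and \textbf{(2)}. By Proposition \ref{lower_bound} the Clifford indices occurring in \textbf{(2)} are bounded below by $-n+1$, so $m_2$ is finite whenever its family is nonempty. The first, routine, inclusion I would record is that \textbf{(2)} sits inside \textbf{(1)}: if $S$ is a subcanonical cluster then $\sI_S\omega_C$ is a rank $1$ torsion free sheaf satisfying \eqref{conditionF}. Indeed the inclusion $\sI_S\omega_C\subseteq\omega_C$ gives $\deg(\sI_S\omega_C)_{|C_i}\le\deg{\omega_C}_{|C_i}$, while a generically invertible section $\Oh_C\into\sI_S\omega_C$ restricts to a generically invertible section on every subcurve $B$ and hence forces $\deg(\sI_S\omega_C)_{|B}\ge 0$, i.e. nef-ness and the lower bound in \eqref{conditionF}. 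Since the conditions $h^0\ge 2$, $h^1\ge 2$ are literally the same in both problems, every sheaf admissible for \textbf{(2)} is admissible for \textbf{(1)}, and therefore $m_1\le m_2$.

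A tool I would set up at the outset is the symmetry of the Clifford index under Serre duality. For the Gorenstein curve $C$ and a rank $1$ torsion free $\sF$ one has $h^1(C,\sF)=h^0(C,\sHom(\sF,\omega_C))$ and $\deg\sHom(\sF,\omega_C)_{|C_i}=\deg{\omega_C}_{|C_i}-\deg\sF_{|C_i}$; combining these with Riemann--Roch yields $\Cliff(\sF)=\Cliff(\sHom(\sF,\omega_C))$ and shows that the componentwise bounds \eqref{conditionF} are preserved by $\sF\mapsto\sHom(\sF,\omega_C)$. In particular the roles of $h^0$ and $h^1$ are interchangeable, which is what will let me invoke whichever $\ge 2$ hypothesis is convenient.

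The heart of the matter is the reverse inequality $m_1\ge m_2$. Given an arbitrary $\sF$ admissible for \textbf{(1)}, I would produce a subcanonical cluster $S$, again satisfying $h^0,h^1\ge 2$, with $\Cliff(\sI_S\omega_C)\le\Cliff(\sF)$; this forces $\Cliff(\sF)\ge m_2$ for every such $\sF$ and hence $m_1\ge m_2$. Since $h^1(\sF)=h^0(\sHom(\sF,\omega_C))\ge 2>0$, there is a nonzero homomorphism $\varphi\colon\sF\to\omega_C$. If $\varphi$ is generically invertible, then $\sF$ being torsion free of rank $1$ makes $\varphi$ injective with $0$-dimensional cokernel $\Oh_S$, so $\sF\cong\sI_S\omega_C$; moreover a generically invertible global section of $\sF$, which exists because $h^0(\sF)\ge 2$, becomes a generically invertible section of $\sI_S\omega_C$, so $S$ is subcanonical. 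In that situation $\Cliff(\sI_S\omega_C)=\Cliff(\sF)$ and $S$ lies in the family \textbf{(2)}, exactly as needed.

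The main obstacle is precisely the production of a generically invertible homomorphism, that is, excluding the possibility that every $\varphi\in\Hom(\sF,\omega_C)$ (and symmetrically every global section of $\sF$) vanishes identically on some fixed proper subcurve. I expect to treat this by a decomposition argument modelled on the $2$-disconnected step of Proposition \ref{lower_bound}: if $\varphi$ vanishes on a subcurve $B$, I restrict to the complementary subcurve $A$ through the canonical sequence \eqref{canonical splitting}, and I use the upper bound $\deg\sF_{|C_i}\le\deg{\omega_C}_{|C_i}$ (equivalently $\deg\sHom(\sF,\omega_C)_{|C_i}\ge 0$) to show that the fixed locus of the linear system on $\sHom(\sF,\omega_C)$ cannot swallow an entire component; peeling off any genuinely degenerate subcurve and combining the Serre-duality symmetry with the minimality of $\Cliff(\sF)$ should then either return a generically invertible $\varphi$ or contradict minimality. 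Finally, existence of the minima comes out automatically: the two families are simultaneously empty or nonempty, and in the nonempty case the chain $m_1\le m_2\le m_1$ shows $m_1=m_2$, attained by the minimizer of \textbf{(2)}.
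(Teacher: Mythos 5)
Your overall architecture coincides with the paper's: the inclusion of family \textbf{(2)} in family \textbf{(1)} gives $m_1\le m_2$; Serre duality ($\Cliff(\sF)=\Cliff(\sHom(\sF,\omega_C))$, with the constraints of \eqref{conditionF} preserved) lets you pass between ``$\Oh_C\into\sF$'' and ``$\sF\into\omega_C$ generically surjective''; and automatic adjunction then identifies a minimizer of \textbf{(1)} with $\sI_S\omega_C$ for a subcanonical cluster $S$. All of that matches the paper.

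The gap is in the one step that carries the weight of the theorem: ruling out that every global section of a minimizing $\sF$ vanishes identically on some subcurve. You write that a decomposition argument ``modelled on the $2$-disconnected step of Proposition \ref{lower_bound}'', combined with minimality, ``should either return a generically invertible $\varphi$ or contradict minimality'', but you never exhibit the competitor sheaf whose Clifford index is strictly smaller. Note that the degree bounds alone do not prevent the base locus from containing a whole component (so your claim that the upper bound $\deg\sF_{|C_i}\le\deg{\omega_C}_{|C_i}$ already excludes this is not justified), and the Mayer--Vietoris induction of Proposition \ref{lower_bound} is the wrong model: that argument works at a separating node and proceeds by induction on components, whereas here $C$ may well be $2$-connected. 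The paper's device is an explicit exchange: if $B$ is the \emph{maximal} subcurve on which all sections of $\sF$ vanish and $C=A\cup B$, one replaces $\sF$ by $\sG\iso\sF_{|A}(-B)\oplus\Oh_B$, checks that $0\le\deg\sG_{|C_i}\le\deg\sF_{|C_i}$, that $h^0(\sG)=h^0(\sF)+h^0(\Oh_B)\ge h^0(\sF)+1$, and that $h^0(\sG)\ge 3$, $h^1(\sG)\ge 3$, whence $\Cliff(\sG)<\Cliff(\sF)$ with $\sG$ still admissible for \textbf{(1)} --- contradicting minimality. Without this construction (or an equivalent one) your argument does not close. A smaller point: since your reverse inequality is only established for a minimizer of \textbf{(1)}, you should also record why that minimum is attained (the admissible Clifford indices form a bounded set of integers, because $0\le\deg\sF\le 2p_a(C)-2$ and $h^0$ is bounded on nef rank $1$ sheaves of bounded degree), rather than asserting that existence ``comes out automatically''.
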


\begin{proof}
By Proposition \ref{lower_bound} the second minimum exists. It is moreover obvious that the second set is included in the first, thus such minimum is bigger than or equal to the infimum of the first set. 

To conclude the proof it is enough to prove that for every rank 1 torsion free sheaf  $\sF$   in the first set  attaining   the minimal Clifford index 
%such that 
%$$ 0 \leq \deg[\sF_{|C_i}] \leq \deg {\omega_C}_{|C_i}  \mbox{ 
% for every  }  C_i  \mbox{ and }  h^0(\sF) \geq 2,  \ h^1(\sF) \geq 2 $$
%such that $\Cliff (\sF)$ is minimum 
there exists a subcanonical cluster $T$ such that $\sI_T \omega_C \iso \sF.$ %  satyisfing  $h^0(C,\sI_S \omega_C) \geq 2,  \ h^1(C, \sI_S \omega_C) \geq 2.$
This is equivalent to  prove  that a rank 1 torsion free sheaf $ \sF$ with   minimal Clifford index   is generically invertible and moreover there exists an inclusion $ \sF \into \omega_C$ 
which is generically surjective. 

For the first  statement, assume for a contradiction that 
$\sF$ itself is not generically invertible  and  let $B\subset C$ be the maximal subcurve of $C$ such that every section
$s \in H^0(C, \sF)$  vanishes identically on  $B$. Consider the decomposition $C=A\cup B$.
Then by the standard exact sequence
$$ 0 \to \sF_{|A}(-B) \to  \sF \to \sF_{|B} \to 0 $$
we obtain 
$$ H^0(A,\sF_{|A}(-B)) \iso H^0(C, \sF)$$
$$0 \to H^0(B, \sF) \to  H^1(A,\sF_{|A}(-B)) \to H^1(C, \sF) \to H^1(B, \sF)\to 0.$$
We take the sheaf $\sG \iso \sF_{|A}(-B) \oplus \Oh_{B}(A)( -A) $. Notice that  $\sG$ is a rank 1 torsion free sheaf and moreover 
it is $0 \leq \deg_{C_i} \sG \leq \deg_{C_i} \sF$  for every $C_i$ since $\sF_{|A}(-B)$ does not vanish on any subcurve of $A$ (see \cite[Rem. 4.1]{FrTe1} for details). 

Then it is immediately seen that $\Oh_C \into \sG$, i.e., $\sG$ is generically invertible,  and
 $$h^0(\sG)= h^0(B, \Oh_B) + h^0(A,\sF_{|A}(-B)) \geq   h^0(A,\sF_{|A}(-B)) +1= h^0(C,\sF) +1 \geq 3, $$ 
 $$h^1(\sG) =  h^1(B, \Oh_B) + h^1(A,\sF_{|A}(-B)) =
 h^1(B, \Oh_B) + h^1(C,\sF)+h^0(B,\sF) -h^1(B,\sF) \geq $$ $$ \geq h^1 (C,\sF) +
h^1(C,\sF) + \deg(\sF_{|B}) + h^0(\Oh_B) \geq 3. $$
Moreover  
we obtain  $\Cliff(\sG) < \Cliff(\sF)$, since $\deg (\sG) \leq \deg (\sF)$ and  $h^0(\sG) > h^0(\sF)$, which is absurd.  Therefore we have in particular 
$\Oh_C \into \sF$.

 Now  we show  that $\sF \into \omega_C$. The dual sheaf $\sHom (\sF, \omega_C)$  satisfies the same assumptions $\sF$ does and by Serre duality  has the same Clifford index.
  Hence thanks to the previous step $\Oh_C \into \sHom (\sF, \omega_C)$. In particular $H^0(C, \Oh_C) \into H^0(C, \sHom (\sF, \omega_C))= \Hom (\sF, \omega_C)$,
  that is, there is a map from $\sF$ to $\omega_C$ not vanishing on any component. 
    By automatic adjunction (\cite[Proposition 2.4]{CFHR}) we  conclude that 
    $\sF \iso \sI_T \omega_C$, for some suitable 0-dimensional scheme $T$.

 \end{proof}
 
 \subsection{Clifford index of curves}
 The above theorem allows us to introduce the following  notion of Clifford index for a reduced  curve.
\begin{DEF}\label{CliffC} Let $C=C_1\cup\cdots \cup C_n$ be a connected reduced  curve with planar singularities.
The Clifford index of  \ $C$ is
$$
\begin{array}{rl}
 \Cliff(C): = \min\{ \Cliff(\sF) \ : &  \sF \mbox{ rank 1 torsion free sheaf s.t. } \\
 & 0 \leq \deg[\sF_{|C_i}] \leq \deg {\omega_C}_{|C_i} \   \text{
 for every } C_i \ ; \\ &  h^0(\sF) \geq 2,  \ h^1(\sF) \geq 2
 \}
 \end{array}$$
\end{DEF}
As in the smooth case, we say that a rank 1 torsion free sheaf  $\sF$ contributes to the Clifford index of the curve $C$ if $h^0(C,\sF) \geq 2$ and $   h^1(C,\sF) \geq 2$. 

For 4-connected curves   the Clifford index 
is always non-negative as can be seen by the following result.

\begin{TEO}\label{4-connected} If $C$ is a  4-connected  reduced curve with planar singularities then $\Cliff(C) \geq 0$ and it is 0 if and only if $C$ is honestly hyperelliptic. \end{TEO}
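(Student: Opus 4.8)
The plan is to reduce, via Theorem \ref{cliff=}, to sheaves of the form $\sI_S\omega_C$, and then to run the multiplication-map argument underlying the classical Clifford inequality, using $4$-connectedness precisely to control the reducible degeneracies that would otherwise make the index negative (as the $3$-connected example shows it can).

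By Theorem \ref{cliff=} it suffices to prove $\Cliff(\sI_S\omega_C)\geq 0$ for every subcanonical cluster $S$ with $h^0(C,\sI_S\omega_C)\geq 2$ and $h^1(C,\sI_S\omega_C)\geq 2$. Put $\sF=\sI_S\omega_C$ and $\sF^{*}=\sHom(\sF,\omega_C)\iso\sI_{S^{*}}\omega_C$. Since $C$ is connected, Theorem \ref{thm:curve}(i) and Riemann--Roch give $h^0(C,\omega_C)=p_a(C)$ and $\Cliff(\sF)=p_a(C)+1-h^0(\sF)-h^1(\sF)$, while Serre duality gives $h^1(\sF)=h^0(\sF^{*})$. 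Thus the statement $\Cliff(\sF)\geq 0$ is equivalent to the Clifford-type inequality $h^0(\sF)+h^0(\sF^{*})\leq p_a(C)+1$.

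To prove the latter I would use the evaluation pairing $\mu\colon H^0(\sF)\otimes H^0(\sF^{*})\to H^0(\omega_C)$ induced by $\sF\otimes\sHom(\sF,\omega_C)\to\omega_C$, and reduce everything to the dimension bound $\dim\im\mu\geq h^0(\sF)+h^0(\sF^{*})-1$: since $\im\mu\subseteq H^0(\omega_C)$ has dimension at most $p_a(C)$, this bound is exactly the inequality above. Being $4$-connected, $C$ is in particular $2$-connected, so $|\omega_C|$ is base point free by Theorem \ref{thm:curve}(ii), and the pencils inside $H^0(\sF)$ have no base component because the reduction of Theorem \ref{cliff=} makes $\sF$ generically invertible. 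I would then establish the dimension bound by a base point free pencil trick on a pencil $\langle\sigma_0,\sigma_1\rangle\subseteq H^0(\sF)$, whose kernel is governed by global sections of $\sHom(\sF^{\otimes 2},\omega_C)$; the only way the bound can fail is that a product of sections vanishes identically on a proper subcurve.

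The main obstacle is precisely this last point. I expect that a failure of $\dim\im\mu\geq h^0(\sF)+h^0(\sF^{*})-1$ forces an effective decomposition $C=A\cup B$ on which the relevant sections concentrate, and that estimating the associated intersection number through the genus formula (\ref{genere A+B}) and the restriction sequence (\ref{canonical splitting})—with the bound of \cite{FrTe1} used in Proposition \ref{lower_bound} supplying the control on the subcurves—yields $A\cdot B\leq 3$, contradicting $4$-connectedness. Carrying out this combinatorial estimate sharply enough to separate the values $3$ and $4$ is the delicate part, and it is where the hypothesis enters in full strength. For the equality statement I would analyse the borderline case, in which $\mu$ is as degenerate as allowed and its image fills $H^0(\omega_C)$: the resulting rigidity forces the sections of $\sF$ to be composed with a pencil of degree $2$, i.e.\ a finite morphism $C\to\proj^1$ of degree $2$, so that $C$ is honestly hyperelliptic; here Theorem \ref{thm:curve}(iii) gives the clean dichotomy, since a $4$-connected curve that is not honestly hyperelliptic has $\omega_C$ very ample and a nondegenerate canonical multiplication, forcing $\Cliff(C)>0$. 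The converse is immediate, as a degree $2$ morphism $\psi\colon C\to\proj^1$ produces a contributing sheaf $\psi^{*}\Oh_{\proj^1}(1)$ of degree $2$ with $h^0\geq 2$, whence $\Cliff(C)\leq 0$ and so $\Cliff(C)=0$ by the first part.
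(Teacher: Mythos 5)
Your overall reduction is sound: via Theorem \ref{cliff=} one may restrict to $\sF=\sI_S\omega_C$ with $S$ subcanonical, and by Riemann--Roch and Serre duality the claim $\Cliff(\sF)\geq 0$ is indeed equivalent to $h^0(\sF)+h^0(\sHom(\sF,\omega_C))\leq p_a(C)+1$. The problem is that everything after that point is a plan rather than a proof. The Hopf-type bound $\dim\im\mu\geq h^0(\sF)+h^0(\sF^{*})-1$ is exactly what can fail on reducible curves (sections supported on different components multiply to zero), and your proposed remedy --- that any failure ``forces an effective decomposition $C=A\cup B$ on which the relevant sections concentrate'' with $A\cdot B\leq 3$ --- is precisely the content of the theorem; you state it as an expectation and explicitly defer ``the delicate part''. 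Nothing in the proposal actually produces the decomposition or the estimate $A\cdot B\leq 3$, and the base point free pencil trick for rank~$1$ torsion free sheaves on a reducible Gorenstein curve also needs justification (the kernel of the evaluation map is not obviously governed by $\sHom(\sF^{\otimes 2},\omega_C)$ when $\sF$ is not invertible). So the central inequality is left unproved.

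For comparison, the paper does none of this work here: it simply invokes \cite[Theorem B]{FrTe1}, which states that on a $4$-connected curve every rank one torsion free sheaf satisfies $h^0(C,\sF)\leq \frac{\deg\sF}{2}+1$, together with the classification of the equality cases ($\sF\iso\sI_T\omega_C$ with $T=0$, $T=\omega_C$, or $C$ honestly hyperelliptic and $T$ a multiple of the honest $g^1_2$). Your route, if carried out, would amount to re-proving that theorem. Two smaller points: the assertion that very ampleness of $\omega_C$ plus ``a nondegenerate canonical multiplication'' forces $\Cliff(C)>0$ is not an argument --- it presupposes the same equality analysis you are trying to establish; and in the converse direction you should also check that $h^1(\psi^{*}\Oh_{\proj^1}(1))\geq 2$ (true for $p_a(C)\geq 3$) and that the multidegree condition of Definition \ref{CliffC} holds before concluding that $\psi^{*}\Oh_{\proj^1}(1)$ contributes to $\Cliff(C)$.
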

\begin{proof} By \cite[Theorem B ]{FrTe1} if $C$ is 4-connected then  for every rank one torsion free sheaf  $\sF$ we have $h^0(C,\sF )\leq \frac{\deg{\sF}}{2}+1$.

Moreover  the above mentioned theorem shows that if equality holds then $\sF \cong \sI_T \omega_C$,  where $T$ is a subcanonical cluster and, as in the smooth case,   either 
$T=0,\, \omega_C$
or   $C$ is honestly hyperelliptic and $T$ is a multiple of the honest $g_{2}^{1}$.

\end{proof}
\begin{COR}\label{irreducible} If $C$ is an irreducible curve  with planar singularities then $\Cliff(C) \geq 0$ and it is 0 if and only if $C$ is  hyperelliptic. \end{COR}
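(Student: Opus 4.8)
The plan is to obtain this as an immediate specialization of Theorem \ref{4-connected}. The crucial observation is that an irreducible curve is trivially $4$-connected: by definition every subcurve of $C$ is a union of irreducible components, so if $C$ has a single irreducible component there is no effective decomposition $C = A \cup B$ into proper nonempty subcurves. The defining inequality $A \cdot B \geq m$ is therefore satisfied vacuously for every value of $m$, and in particular $C$ is $4$-connected.

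Granting this, Theorem \ref{4-connected} applies directly and gives $\Cliff(C) \geq 0$, with equality if and only if $C$ is honestly hyperelliptic, that is, if and only if there exists a finite morphism $\psi \colon C \to \proj^1$ of degree $2$. It then remains only to reconcile this with the phrasing of the corollary. For an irreducible curve the existence of such a degree-$2$ morphism is exactly the classical notion of hyperellipticity, so the qualifier ``honestly'' is superfluous in this setting and the two conditions coincide.

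I do not expect any substantial obstacle, since the corollary reduces to the theorem once the vacuous $4$-connectedness is noted. The single point deserving a word of justification is that the paper's notion of effective decomposition requires both $A$ and $B$ to be nonempty proper subcurves, which is precisely what legitimizes the vacuous verification; combined with the remark that for irreducible curves ``honestly hyperelliptic'' and ``hyperelliptic'' name the same property, this completes the argument.
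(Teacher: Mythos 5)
Your argument is correct and is exactly the route the paper intends: the corollary is stated without proof immediately after Theorem \ref{4-connected}, precisely because an irreducible curve admits no proper effective decomposition and is therefore vacuously $4$-connected, and ``honestly hyperelliptic'' reduces to the usual notion of hyperellipticity in the irreducible case. Nothing is missing.
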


If $C$ has many components,  numerical $ m$-connectedness plays a relevant role in the  the computation of the Clifford index.
Indeed  we have the following

\begin{PROP}\label{decomposition}
Let $C=C_1\cup\cdots \cup C_n $  be a  $m$-connected reduced curve with planar singularities.
  
  Assume there exists a decomposition $C=A\cup B$ such that $A\cdot B=m$, $p_a(A)\geq 1$, $p_a(B)\geq 1$.
  Then there exists an invertible sheaf $\sF$  such that $ \Cliff(\sF)=m-2$, $h^0(\sF) \geq 2, h^1(\sF) \geq 2.$

\end{PROP}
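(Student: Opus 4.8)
The plan is to exhibit $\sF$ explicitly as a twist of $\om_C$ by a Cartier divisor concentrated on $B$, and then read off its cohomology from the restriction sequences attached to $C=A\cup B$. Write $D=A\cap B$, so that by the definition of the intersection product $\deg{\om_C}_{|B}=2p_a(B)-2+m$, and by automatic adjunction (\cite[Proposition 2.4]{CFHR}) one has ${\om_C}_{|A}\iso\om_A(D)$ and ${\om_C}_{|B}\iso\om_B(D)$. Since $m\geq 2$, the line bundle ${\om_C}_{|B}=\om_B(D)$ has degree $\geq 2p_a(B)$ and is base point free on $B$; I would take a general member $E\in|{\om_C}_{|B}|$, which is reduced and supported on smooth points of $B\setminus A$, hence a Cartier divisor on $C$. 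Set $\sF:=\om_C(-E)$. As $E$ is disjoint from $A$ and linearly equivalent to ${\om_C}_{|B}$ on $B$, this forces $\sF_{|A}\iso{\om_C}_{|A}=\om_A(D)$ and $\sF_{|B}\iso{\om_C}_{|B}(-E)\iso\Oh_B$.

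With these restrictions the numerics are essentially determined. The degree is $\deg\sF=\deg\om_C-\deg E=(2p_a(C)-2)-(2p_a(B)-2+m)$, which, using $p_a(C)=p_a(A)+p_a(B)+m-1$, gives $\deg\sF=2p_a(A)+m-2$; moreover $\deg\sF_{|C_i}=\deg{\om_C}_{|C_i}$ for $C_i\subseteq A$ and $\deg\sF_{|C_i}=0$ for $C_i\subseteq B$, so $\sF$ is nef and satisfies \eqref{conditionF} (under the standing hypotheses, where no component carries negative canonical degree). To get $h^0$ I would use the restriction sequence for $A$, which by the adjunction above reads
\[0\to\om_A\to\sF\to\Oh_B\to 0,\]
since $\sF_{|A}(-B)=\om_A(D)(-D)=\om_A$. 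Assuming $A$ and $B$ connected, $h^0(\om_A)=p_a(A)$ and $h^0(\Oh_B)=1$, so everything reduces to the vanishing of the connecting map $H^0(\Oh_B)\to H^1(\om_A)$; granting this, $h^0(\sF)=p_a(A)+1$ and hence $\Cliff(\sF)=(2p_a(A)+m-2)-2(p_a(A)+1)+2=m-2$.

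The crux is therefore this vanishing, equivalently the surjectivity of the restriction $H^0(C,\om_C)\onto H^0(B,{\om_C}_{|B})$; concretely it says that the section of ${\om_C}_{|B}$ cutting out $E$ lifts to a global canonical section, so that $\sF$ acquires a section restricting to a nowhere-zero constant on $B$. I would extract this from the canonical splitting sequence \eqref{canonical splitting} $0\to\om_A\to\om_C\to{\om_C}_{|B}\to 0$, whose long exact sequence gives $H^0(\om_C)\to H^0({\om_C}_{|B})\to H^1(\om_A)\to H^1(\om_C)\to H^1({\om_C}_{|B})$. Here $H^1({\om_C}_{|B})=h^1(\om_B(D))=h^0(\Oh_B(-D))=0$ because $\deg\om_B(D)>2p_a(B)-2$, while Theorem \ref{thm:curve}(i), applied to the connected curves $A$ and $C$, gives $h^1(\om_A)=h^1(\om_C)=\Ka$. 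Thus $H^1(\om_A)\to H^1(\om_C)$ is a surjection of one-dimensional spaces, hence injective, so the boundary map $H^0({\om_C}_{|B})\to H^1(\om_A)$ is zero, yielding the desired surjectivity. Feeding it back proves $h^0(\sF)=p_a(A)+1\geq 2$ (using $p_a(A)\geq 1$), and $h^1(\sF)=h^0(\om_C\otimes\sF^{-1})=h^0(\Oh_C(E))=p_a(B)+1\geq 2$ follows symmetrically (or directly from $\chi(\sF)=p_a(A)-p_a(B)$), using $p_a(B)\geq 1$.

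The main obstacle I anticipate is exactly this control of $h^0(\sF)$: producing the lift, equivalently the vanishing of the boundary map, is what pins the Clifford index to the value $m-2$ rather than merely bounding it, and it is here that the $1$-connectedness of $C$ (via Theorem \ref{thm:curve}(i)) enters decisively; note also that the two hypotheses $p_a(A)\geq 1$ and $p_a(B)\geq 1$ are precisely what guarantee $h^0(\sF)\geq 2$ and $h^1(\sF)\geq 2$. A secondary, bookkeeping issue is the reduction to connected $A$ and $B$: if either is disconnected, the terms $h^0(\om_A)$, $h^0(\Oh_B)$ and $h^1(\om_A)$ must be replaced by the corresponding counts over connected components, and one should check that the decomposition can be chosen with $A,B$ connected, as is the case in all the intended applications, where the two pieces are irreducible.
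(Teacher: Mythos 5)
Your construction is essentially the one in the paper. The paper takes a generic $s\in H^0(A,{\omega_C}_{|A})$ with divisor $\Delta$ disjoint from $B$ and sets $\sF=\Oh_C(\Delta)$; this has the same restrictions $\sF_{|A}\iso{\omega_C}_{|A}$, $\sF_{|B}\iso\Oh_B$ as your $\omega_C(-E)$, and the cohomology is read off from the same sequence $0\to\omega_A\to\sF\to\Oh_B\to 0$. The only real divergence is at the step you rightly call the crux: the paper gets the surjectivity of $H^0(C,\sF)\to H^0(B,\Oh_B)$ for free, since the tautological section of $\Oh_C(\Delta)$ restricts to a nonzero constant on $B$ because $\Delta\cap B=\emptyset$, whereas you must lift the section cutting out $E$ through $H^0(\omega_C)\to H^0({\omega_C}_{|B})$. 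Your argument for that lift (via $h^1({\omega_C}_{|B})=0$ and $h^1(\omega_A)\iso h^1(\omega_C)\iso\Ka$) is correct, just less economical. Also, the connectedness of $A$ and $B$ is not something you need to arrange separately: it follows from the minimality of $m$, as the paper notes citing \cite[Lemma 2.8]{FrTe1}.

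The one genuine omission is the case $m=1$, which the statement allows and which you exclude by writing ``since $m\geq 2$'' without justification. For $m=1$ your construction actually fails: $A\cap B$ is a single point $P$, ${\omega_C}_{|B}\iso\omega_B(P)$ has degree $2p_a(B)-1$, and $P$ is a base point of $|\omega_B(P)|$ (indeed $h^0(\omega_B(P))=h^0(\omega_B)=p_a(B)$), so no member $E$ of $|{\omega_C}_{|B}|$ can be chosen disjoint from $A$; the same obstruction blocks the paper's version of the construction, which is why the paper handles $m=1$ separately, using that $P$ is then a base point of $|\omega_C|$ (Theorem \ref{thm:curve}) so that $\Cliff(\sI_P\omega_C)=-1=m-2$. (Note in passing that $\sI_P\omega_C$ is not invertible at the node $P$, so even the paper's patch sits slightly uneasily with the word ``invertible'' in the statement.) You should either add this boundary case or state explicitly that your argument covers $m\geq 2$ only.
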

\begin{proof}
Consider the decomposition $C=A\cup B$ with  $A\cdot B=m$.  Notice that  $A$ and $B$ are numerically connected
by minimality  of $m$ (see   \cite[Lemma 2.8]{FrTe1}) and moreover by our assumptions it is $p_a(A)\geq 1 , \  p_a(B) \geq 1 $. 

If $m=1$ then by Thm. \ref{thm:curve} there exists a base point $P$ for the canonical system. Thus $\Cliff(\sI_P  \omega_C)=-1$ and we may conclude. 

From now on we  assume that $m \geq 2$ and in particular $|\omega_C|$ is base point free.
Choose a generic $s\in H^0(A, {\omega_C}_{|A}) $ and take the effective divisor $\Delta = div(s)$. Since $|\omega_C|$ is base point free, $\Delta$ is the union of smooth points and moreover by our construction we may assume  $\Delta \cap B=\emptyset$.
 Consider the invertible sheaf  $\sF:=\Oh_C(\Delta)$. It is 
$$\sF_{|A}\iso {\omega_C}_{|A} \ , \ \ \sF_{|B}\iso \Oh_B.  $$
In particular $ \sF (-B)_{|A}\iso  \omega_A$ and we have the exact sequence 
\begin{eqnarray*} 0\to H^0(A, \omega_A) \to H^0(C, \sF) \to H^0(B, \Oh_B) \to  \\
\to H^1(A, \omega_A) \to H^1(C, \sF) \to H^1(B, \Oh_B) \to 0.
\end{eqnarray*} 
But $H^0(C, \sF)$ does not vanish on $B$ by our construction, 
whence
$$h^0(C,\sF)=h^0(B, \Oh_B)+h^0(A, \omega_A)=1+p_a(A)\geq 2$$
$$h^1(C,\sF)=h^1(B, \Oh_B)+h^1(A, \omega_A)=1+p_a(B)\geq 2$$
since both $A$ and $B$ are numerically connected. 

Finally  by the above computation  we get 
\begin{eqnarray*}\Cliff (\sF)&=&\deg_C(\Oh_C(\Delta))-2h^0(C,\Oh_C(\Delta))+2=\\&=&  2p_a(A)-2+m-2\cdot(1+p_a(A))+2= m-2. \end{eqnarray*}

\end{proof}

As an immediate consequence we obtain the following theorem.
\begin{TEO}\label{prop:Clifford}
Let $C=C_1\cup\cdots \cup C_n $  be a  connected  reduced curve with planar singularities.
  Assume $C_i \neq \proj^1$ for every $i=1,\cdots, n$.

If $C$ is  $m$-connected but  $(m+1)$-disconnected
  (that is, there is a decomposition $C=A\cup B$ with $A\cdot B =m$) then 
%Let $m:= \min \{ A\cdot B : C =A\cup B, A\neq \emptyset, B\neq \emptyset\}$.
% and let  $C=A\cup B$  be a decomposition  such that $A\cdot B = m$. % and $p_a(A) \geq 1$, $p_a(B)\geq 1.$ Then
%there exist an invertible sheaf $\sF$  which contributes to the Clifford index such that $\Cliff (\sF) \leq m-2$. In particular  
$$ \Cliff(C) \leq \min  \big\{m-2, \big[\frac{p_a(C)-1}{2}\big] \big\} .$$

\end{TEO}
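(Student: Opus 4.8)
The plan is to establish the two upper bounds separately. The inequality $\Cliff(C) \leq m-2$ will follow directly from Proposition \ref{decomposition}, once the genus hypotheses there are verified, while $\Cliff(C) \leq \big[\frac{p_a(C)-1}{2}\big]$ is the analogue of the maximal-gonality bound for smooth curves and requires exhibiting a low-degree pencil. The final statement is then the conjunction of the two.

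First I would check that the given decomposition $C=A\cup B$ with $A\cdot B=m$ satisfies the hypotheses $p_a(A)\geq 1$ and $p_a(B)\geq 1$ of Proposition \ref{decomposition}. Since $m$ is the connectivity of $C$ and this decomposition realises it, both $A$ and $B$ are numerically connected by minimality (see \cite[Lemma 2.8]{FrTe1}). The hypothesis $C_i\neq\proj^1$ guarantees that every irreducible component has $p_a(C_i)\geq 1$, an irreducible Gorenstein curve having arithmetic genus $0$ exactly when it is smooth rational. I would then argue by induction on the number of components that any connected subcurve all of whose components are $\neq\proj^1$ has $p_a\geq 1$: removing a suitable component $C_j$ so that the remainder $A'$ stays connected, formula (\ref{genere A+B}) gives $p_a(A)=p_a(A')+p_a(C_j)+A'\cdot C_j-1\geq 1+1+1-1\geq 1$, using $A'\cdot C_j\geq 1$ from $1$-connectedness. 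Applying this to both $A$ and $B$ and invoking Proposition \ref{decomposition} produces an invertible sheaf $\sF$ with $h^0(\sF)\geq 2$, $h^1(\sF)\geq 2$ and $\Cliff(\sF)=m-2$, whence $\Cliff(C)\leq m-2$.

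For the second bound it suffices to exhibit a single rank $1$ torsion-free nef sheaf obeying (\ref{conditionF}) that contributes to the Clifford index and has index at most $\big[\frac{p_a-1}{2}\big]$. I would reduce this to producing a \emph{pencil}, i.e. a sheaf $\sF$ of degree $d\leq\big[\frac{p_a+3}{2}\big]$ with $h^0(\sF)\geq 2$: Riemann--Roch for a rank $1$ torsion-free sheaf then forces $h^1(\sF)=h^0(\sF)-d+p_a-1\geq p_a+1-d\geq 2$, since $d\leq p_a-1$, so $\sF$ contributes and
$$\Cliff(\sF)\leq d-2\leq\big[\tfrac{p_a+3}{2}\big]-2=\big[\tfrac{p_a-1}{2}\big].$$
Equivalently, through Theorem \ref{cliff=} one may instead look for a subcanonical cluster $S$ of degree $\delta=\big[\frac{3p_a-6}{2}\big]$ with $h^0(\sI_S\omega_C)\geq 2$, whose residual supplies $h^1\geq 2$ by Serre duality.

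The main obstacle is precisely this existence statement: on a reduced, possibly reducible Gorenstein curve one must guarantee a pencil of the generic gonality degree $\big[\frac{p_a+3}{2}\big]$, the counterpart of the smooth Brill--Noether bound. I expect to settle it by a parameter count showing that the locus of special clusters of degree $\delta$ with $h^0(\sI_S\omega_C)\geq 2$ is non-empty, its expected dimension being the Brill--Noether number $\rho=3p_a-2\delta-6\geq 0$, and using that the connectivity hypotheses together with $C_i\neq\proj^1$ make $|\omega_C|$ base-point-free by Theorem \ref{thm:curve}; alternatively, in the regime where $m$ is large the components $A$ and $B$ have small arithmetic genus, and I would build the required pencil inductively from the components across the $m$ points of $A\cap B$. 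Combining the two bounds yields $\Cliff(C)\leq\min\{m-2,\ \big[\frac{p_a-1}{2}\big]\}$.
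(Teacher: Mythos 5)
Your first half, the bound $\Cliff(C)\le m-2$, is correct and is essentially the paper's own argument: one checks $p_a(A)\ge 1$ and $p_a(B)\ge 1$ (your induction on components via formula (\ref{genere A+B}) and the hypothesis $C_i\neq\proj^1$ is a perfectly good way to do this, and $A$, $B$ are numerically connected by minimality of $m$) and then applies Proposition \ref{decomposition}.

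The second half has a genuine gap. You reduce $\Cliff(C)\le\big[\frac{p_a(C)-1}{2}\big]$ to the existence on $C$ itself of a pencil of degree at most $\big[\frac{p_a(C)+3}{2}\big]$, and you propose to obtain it from a Brill--Noether parameter count ($\rho\ge 0$ forces $W^1_d\neq\emptyset$). That existence theorem is a theorem for smooth irreducible curves (Kempf, Kleiman--Laksov); on a reducible reduced curve the naive statement is not available, the relevant parameter space is a space of rank~$1$ torsion-free sheaves or clusters rather than a proper Picard variety, and the determinantal/class computation does not transfer --- this is exactly why limit linear series and the subtleties of compactified Jacobians exist. You flag this yourself as ``the main obstacle'' and leave it open, so the proof is incomplete at its crucial point. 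The paper sidesteps the issue by a degeneration argument: realise $C$ as the special fibre $C_0$ of a flat proper family $f:X\to T$ with $X$ smooth and general fibre $C_t$ a smooth curve of genus $p_a(C)$; on $C_t$ classical Brill--Noether theory provides an invertible sheaf of Clifford index at most $\big[\frac{p_a(C)-1}{2}\big]$ contributing to the Clifford index; working in the relative Picard scheme $\Pic^d_f$ and using semicontinuity of $h^0$ and $h^1$ in $t$, the restriction to $C_0$ still contributes and its Clifford index can only drop. That transfer from the smooth fibre is the missing idea. (A secondary point: your Riemann--Roch step $h^1(\sF)\ge p_a+1-d\ge 2$ requires $d\le p_a-1$, which fails for $d=\big[\frac{p_a+3}{2}\big]$ when $p_a=3$; the semicontinuity argument controls $h^1$ directly and avoids this.)
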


\begin{proof}
First of all  let us show that $ \Cliff(C) \leq \min  [\frac{p_a(C)-1}{2}] $ by a degeneration argument. 

Consider  a one-parameter degeneration $f:X \to T$, where $X$ is a smooth surface and  $T$ an affine curve.
Assume that $f$  is flat and proper and there is a point $s_0\in S$ such that $f^{-1}(t_0):=C_0\iso C$,
whilst  for $t\neq 0$  $f^{-1}(t):= C_t$ is a smooth curve of genus $p_a(C)$.
 For  each integer $d$,
let $\Pic_{f}^{d}$ be the degree-$d$  relative Picard scheme
of $f$  parameterizing invertible sheaves of
degree $d$ on the fibers of $f$
 (see e.g.  \cite{grothendieck}).
  Then, for every  invertible sheaf  $\sF$ in $\Pic_{f}^{d}$ we have
 $\Cliff (\sF_{| C_t})  \leq \big[\frac{p_a(C)-1}{2}\big] $.
 Since $h^0 (\sF_{| C_t})$ and $h^1 (\sF_{|C_t})$ are semicontinuous function in $t$, if $\sF_{| C_t}$ contributes to the Clifford index for some $t$, then
$\sF_{| C_0} $ contributes to the Clifford index of $C_0$ and moreover  by definition it is
 $$ \Cliff(\sF_{| C_0}) \leq
\Cliff (\sF_{| C_t})  \leq \big[\frac{p_a(C)-1}{2}\big] .
$$

\hfill\break
Now  let us  show that $ \Cliff(C) \leq m-2$, where $m:= \min \{ A\cdot B : C =A\cup B, A\neq \emptyset, B\neq \emptyset\}$. 

Take  a proper decomposition $C=A\cup B$ with  $A\cdot B=m$.  Notice that  $A$ and $B$ are numerically connected
by minimality  of $m$ (see   \cite[Lemma 2.8]{FrTe1}) and moreover by our assumptions it is $p_a(A)\geq 1 , \  p_a(B) \geq 1 $. 
Therefore by the above Proposition \ref{decomposition} there exists an invertible sheaf 
  $\sF=\Oh_C(\Delta)$  which  contributes to the Clifford index of $C$ and whose Clifford index verifies 
  $ \Cliff(\sF)=m-2$.

\end{proof}

\begin{REM} If we restrict our attention to stable curves, it
is worth mentioning that the Clifford index, as defined in Definition
 \ref{CliffC}, is not the limit of the Clifford index of smooth curves. More precisely, if a curve $C$ is limit of smooth curves $C_t$ with $\Cliff(C_t) \leq \gamma$, then by semicontinuity we still have  $\Cliff(C) \leq \gamma$, but the converse does not hold. One can see that with a simple dimensional count. It is easy to compute that the locus of reduced $m$-connected curves has codimension $m$ in $\overline{\sM}_g$, and Theorem  \ref{prop:Clifford} shows that those curves have Clifford index at most $m-2$. On the contrary, considering the loci
$ \sM^r_{g,d}$ of smooth curves carrying a $g^r_d$   one can see that for small $m$, the locus of smooth curves having Clifford index at most $m-2$  has a far bigger codimension than $m$.

\end{REM}

\begin{REM}
Let $C$ be a $m$-connected, but $(m+1)$-disconnected,  reduced  curve of arithmetic genus $p_a(C)>0$ and let $C=A\cup B$ be a decomposition of $C$ in two connected curves of arithmetic genus $p_a(A)$, respectively $p_a(B)$,
such that $A\cdot B=m$.

By the key  formula (\ref{genere A+B}) if  $m  \leq p_a(A) + p_a(B) +2$ then it is 
$ m-2 \leq \big[\frac{p_a(C)-1}{2}\big] $.
Therefore it is easy to construct stable curves with given Clifford  just by taking $m$ satisfying the above relation.
\end{REM}

\subsection{Examples of  curves with negative Clifford index}\label{example}

In this section we are going to show two examples of curves having negative Clifford index.  The first  example shows that the inequality of Proposition \ref{lower_bound} is sharp.
The second example shows that for curves not 4-connected the geometric interpretation of the Clifford index is more subtle. 

\begin{EX}\label{chain}
Let $C= \cup_{i=0}^n C_i$ be a chain of smooth curves $C_i$ with positive genus, i.e. $C_i \cdot C_{i+1}=1$, otherwise $C_i \cdot C_j$ vanishes.

$$\xymatrix{
\Gamma_0  \ar@{-}[r] & \Gamma_1  \ar@{.}[r] & \Gamma_{n-1}  \ar@{-}[r]  & \Gamma_n \\
}$$
Let $S= \Sing{C}$. Then 
$h^0(C, \sI_S \omega_C)= \sum_{i=1}^n p_a(C_i)= p_a(C).$ Therefore we obtain 

$$\Cliff(\sI_S \omega_C)= 2 p_a(C) -2 - \deg S - 2 h^0(C, \sI_S \omega_C) +2= -n +1.$$
\end{EX}

Notice that in the above example every point in $S$ is a base point of $|\omega_C|$. 
Now,  let us point out that this is not always the case. Indeed if $C$ is 3-connected but not 4-connected  (i.e.,  there exists a decomposition $C=A\cup B$ such that $A\cdot B= 3$)
then it might happen that $ \Cliff(C) <0$ even thought $\omega_C$ is normally generated, as shown in the following example.

\begin{EX}\label{3con}
Take  $C= \sum_{i=0}^5 \Gamma_i$ and suppose that $p_a(\Gamma_i) \geq 2$ for every $i$. Suppose moreover that the intersection products are defined by the following dual graph, where the existence of the simple line means that the intersection product between the two curves is 1. 
 $$\xymatrix{\Gamma_0  \ar@{-}[rrr]  \ar@{-}[dr]  \ar@{-}[dd]&&& \Gamma_1  \ar@{-}[dl]  \ar@{-}[dd]\\
 & \Gamma_4  \ar@{-}[r]  \ar@{-}[dl]& \Gamma_3  \ar@{-}[dr] &\\
 \Gamma_5  \ar@{-}[rrr]&&& \Gamma_2
}$$
In this case by \cite[Thm. 3.3]{FrTe2} $\omega_C$ is very ample and normally generated by \cite[Thm. 3.3]{FrTe2}. 

For simplicity, assume moreover that for every $i,j,k$ it is  $\Gamma_i \cap \Gamma_j \cap \Gamma_k =\emptyset$ and 
take $S= \bigcup_{i,j} (\Gamma_i \cap \Gamma_j)$, which is a degree 9 cluster. Then it is easy to check  $h^0(C, \sI_{S}\omega_C)= \oplus_{i=0}^5 h^0(\Gamma_i, K_{\Gamma_i})$, which yields  %and that the splitting index of $S^{\ast}$ is $k=5$.
$\Cliff(\sI_S \omega_C) = -1
$.
(see \cite[Example 5.2]{FrTe1}).

%Therefore  in this example $\Cliff(C)$  is negative and $\omega_C$ is normally generated. 

\end{EX}

\subsection{Clifford index of invertible sheaves}\label{invertible-sheaves}
The following theorem shows that the Clifford index of an invertible sheaf is always nonnegative. 
\begin{TEO}\label{clifford invertible}  Let $C=C_1\cup\cdots \cup C_n$  be a reduced curve with planar singularities. 
Let $\sL$ be an invertible sheaf  such that 
$0 \leq \deg[\sL_{|C_i}] \leq \deg {\omega_C}_{|C_i} $
 for $i=1, \cdots , n$. 
%$\Oh_C \into L \into \omega_C$. 
Then
\begin{equation}\label{cliff_ridotte}
h^0(C, \sL) \leq \frac12 \deg  \sL +1  , \mbox{ i.e.,  }  \  \  \Cliff(\sL) \geq 0.
\end{equation}
\end{TEO}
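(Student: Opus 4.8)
The plan is to put the statement in a symmetric dual form and then induct on the number of components $n$, peeling off a decomposition of lowest numerical connectivity until the already-settled $4$-connected case is reached. First I would set $\sM:=\omega_C\otimes\sL^{-1}$. Since $\deg_{C_i}\sM=\deg_{C_i}\omega_C-\deg_{C_i}\sL$, the sheaf $\sM$ again satisfies the multidegree hypotheses of the theorem, while Serre duality gives $h^1(C,\sL)=h^0(C,\sM)$. Combining this with Riemann--Roch, $h^0(\sL)-h^0(\sM)=\deg\sL-p_a(C)+1$, shows that the assertion $2h^0(\sL)\le\deg\sL+2$ is equivalent to the symmetric \emph{addition inequality}
\begin{equation}
h^0(C,\sL)+h^0(C,\sM)\le p_a(C)+1. \tag{$\ast$}
\end{equation}
So it suffices to prove $(\ast)$ for every invertible $\sL$ (equivalently $\sM$) with $0\le\deg_{C_i}\le\deg_{C_i}\omega_C$ on each component.

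For the base case of the induction, if $C$ is $4$-connected --- in particular if $C$ is irreducible, since then no proper decomposition exists and the connectivity condition is vacuous --- Theorem~\ref{4-connected} applies to both $\sL$ and $\sM$, giving $h^0(\sL)\le\frac12\deg\sL+1$ and $h^0(\sM)\le\frac12\deg\sM+1$; adding the two bounds and using $\deg\sL+\deg\sM=2p_a(C)-2$ yields $(\ast)$ at once. This is where invertibility is essential: on curves that are not $4$-connected the examples of Section~\ref{example} show that non-locally-free sheaves can violate the bound, so the induction must keep the sheaves line bundles.

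For the inductive step, assume $C$ is not $4$-connected and choose a decomposition $C=A\cup B$ with $A\cdot B=m\le 3$ minimal, so that $A$ and $B$ are numerically connected. The restriction sequences $0\to\sL_{|A}(-B)\to\sL\to\sL_{|B}\to 0$ and $0\to\sM_{|B}(-A)\to\sM\to\sM_{|A}\to 0$ give $h^0(\sL)\le h^0(\sL_{|A}(-B))+h^0(\sL_{|B})$ and $h^0(\sM)\le h^0(\sM_{|A})+h^0(\sM_{|B}(-A))$. Using adjunction $\omega_{C|A}\cong\omega_A(A\cap B)$ one checks $\sL_{|A}(-B)\otimes\sM_{|A}\cong\omega_A$ and $\sL_{|B}\otimes\sM_{|B}(-A)\cong\omega_B$, so the inductive hypothesis applied on $A$ to the dual pair $(\sL_{|A}(-B),\sM_{|A})$ and on $B$ to $(\sL_{|B},\sM_{|B}(-A))$ gives $h^0(\sL_{|A}(-B))+h^0(\sM_{|A})\le p_a(A)+1$ and $h^0(\sL_{|B})+h^0(\sM_{|B}(-A))\le p_a(B)+1$. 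Summing and substituting $p_a(C)=p_a(A)+p_a(B)+m-1$ produces $h^0(\sL)+h^0(\sM)\le p_a(C)+3-m$, which is exactly $(\ast)$ whenever $m\ge 2$.

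The remaining case is a disconnecting point ($m=1$, $A\cap B=\{P\}$), where the estimate above loses one unit that must be recovered from the geometry of $\omega_C$: by Theorem~\ref{thm:curve}(ii) the point $P$ is a base point of $|\omega_C|$, so every product $s\cdot t\in H^0(\omega_C)$ with $s\in H^0(\sL)$, $t\in H^0(\sM)$ vanishes at $P$; as the fibres are one-dimensional this forces \emph{all} sections of $\sL$, or \emph{all} sections of $\sM$, to vanish at $P$, and feeding this extra vanishing into the Mayer--Vietoris sequence restores the missing unit through a short case analysis. I expect the genuine obstacle to be elsewhere, namely the verification that the twisted restrictions again obey the multidegree bounds needed to invoke the inductive hypothesis: on $A$ both conditions reduce to $\deg_{C_i}\sL_{|A}(-B)=\deg_{C_i}\sL-(C_i\cdot B)\ge 0$, which can fail on components of $A$ meeting $B$. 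I would resolve this by the subcurve-reduction technique already used in the proof of Theorem~\ref{cliff=}, replacing such a restriction by its nef model on the maximal subcurve where its sections do not all vanish --- an operation that only lowers $h^0$ and brings the multidegrees back into range --- and this bookkeeping is the delicate technical heart of the argument.
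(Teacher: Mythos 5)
Your symmetric reformulation $(\ast)$ and the idea of inducting by splitting along a decomposition of minimal self-intersection are genuinely different from the paper's route (the paper first shows a minimizing $\sL$ must be of the form $\sI_S\omega_C$ for a subcanonical Cartier divisor $S$, quotes \cite[Theorem A]{FrTe1} wholesale for the irreducible and $2$-connected cases, and then only ever splits at a disconnecting point). However, two steps of your argument do not close. First, the multidegree problem you flag at the end is not mere bookkeeping: if $\deg_{C_i}\sL < C_i\cdot B$ on some component $C_i$ of $A$, then $\sL_{|A}(-B)$ is not an admissible input for the inductive hypothesis, and replacing it by its ``nef model'' on a smaller subcurve $A'\subset A$ destroys precisely the duality $\sL_{|A}(-B)\otimes\sM_{|A}\cong\omega_A$ on which the addition inequality rests: $\sM_{|A}$ still lives on all of $A$, so the resulting pair is no longer of the form $(\sN,\ \omega_{A'}\otimes\sN^{-1})$ and $(\ast)$ on $A'$ says nothing about it. You would need a substitute argument here, and none is supplied.

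Second, and more seriously, the disconnecting-point case $m=1$ does not work as described. Granting your (correct) observation that all sections of, say, $\sL$ vanish at the base point $P$, the best you obtain is $h^0(\sL)\le h^0(\sL_{|A}(-P))+h^0(\sL_{|B}(-P))$ and $h^0(\sM)\le h^0(\sM_{|A})+h^0(\sM_{|B})$; the pairs $(\sL_{|A}(-P),\sM_{|A})$ and $(\sL_{|B}(-P),\sM_{|B})$ are indeed dual on $A$ and $B$, so induction gives $h^0(\sL)+h^0(\sM)\le (p_a(A)+1)+(p_a(B)+1)=p_a(C)+2$ (using $p_a(C)=p_a(A)+p_a(B)$ when $A\cdot B=1$) --- still one unit too many, exactly the deficit you started with. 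The extra vanishing at $P$ has been entirely spent making the restricted pairs dual, and nothing is left over to recover the missing unit. This is precisely where the paper has to work hardest: it distinguishes whether $P$ imposes a condition on $h^0(C_1,K_{C_1}-(S_1-P))$, wins the unit directly in one case, and in the other case rules out equality by a parity argument (swapping the roles of $C_1$ and $C_2$ forces $\deg S_1$ to be simultaneously even and odd). Your ``short case analysis'' has no analogue of this device, so the $m=1$ case is a genuine gap rather than a routine verification.
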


\begin{proof} % First of all notice that $\Oh_C \into \sL \into \omega_C$ 
First of all we remark that we may assume $C$ to be connected since $h^0$ and $\deg$ are additive with respect to each connected component. 

Now notice that we may assume  $\sL \not\iso \Oh_C,$ $\sL \not\iso \omega_C$ and $h^0(C, \sL)\neq 0$, since otherwise  eq. (\ref{cliff_ridotte})  is obvious.  
 Take 
 $\sL$ as above such that $\Cliff(\sL)$ is minimum.   Arguing as in the proof of  Theorem \ref{cliff=}  we  conclude that $\Oh_C \into \sL \into \omega_C$, i.e., 
there exists a subcanonical Cartier divisor $S$ such that $\sL \iso \sI_S \om_C$ (see also \cite[\S 2.3]{FrTe1}).
Hence it is sufficient to show  that for every subcanonical Cartier divisor $S$,  $\Cliff (\sI_S \om_C) \geq 0$. 
 We prove this result by induction on the number of irreducible components of $C$. To simplify the notation we write $K_C-S$  for the divisor such that $\Oh_C(K_C-S)\cong
 \sI_S \om_C.$

%Notice that,  since $C$ is reduced and connected, it is 1-connected.
If $C$  is irreducible, the classical Clifford's theorem holds (see \cite[\S III:1]{ACGH}, or see \cite[Theorem A]{FrTe1} for the singular case).
 If $C$ is  2-connected the result follows from  \cite[Theorem A, case (a)]{FrTe1}.

Therefore we are left to  prove that equation (\ref{cliff_ridotte}) holds for reducible, connected but  2-disconnected curves, i.e.,  we may assume  that 
%We are going to prove the inequality by induction.
 there exist connected subcurves $C_1$ and $C_2$ such that $C=C_1\cup C_2$ and $C_1 \cap C_2$ consists of one single point $P$.  In this case 
  %$C_1$ and $C_2$  are reduced and connected  curves and  
  $P$ is a smooth point for both curves  and for $i=1,2$ we can write  ${K_C}_{| C_i} \equiv  {K}_{C_{i}}+P$ as divisors on $C_i$.  

Take  the subcanonical Cartier divisor $S$.  % and its restrictions $S_i$ to each curve. 
Arguing as in Prop. \ref{lower_bound}
 we may assume  that $P \cap S \neq \emptyset$
since otherwise we can take a  residual Cartier divisor $S^\ast $.

Let   $S_1:=S\cap C_1$ and $S_2:=S\cap C_2$.  By the above argument 
 $P \cap S_i \neq \emptyset$ for $i=1,2$, and, since $P$ is a smooth point for each $C_i$,  both the divisors 
$(S_1 -P)$ and $(S_2 -P)$ are  Cartier and  effective. Moreover they are  
 subcanonical on  both the subcurves, since a generically invertible section in $H^0(C, K_C-S)$ restricts to a generically invertible section in $H^0(C_i, K_{C_i}(-(S_i-P)))$.

 % the divisor  $S_i-P$ on each subcurve is Cartier. 

The  exact sequence (\ref{canonical splitting}) for the splitting $C=C_1\cup C_2$  can be written as follows:
%\[0 \to \omega_{C_1} \to \omega_C \to \omega_{C_2}(P)\to 0\]
%and tensorize it with $ \Oh_C(-S)$:
\begin{eqnarray*} 0 \to \omega_{C_1}(-S_1) \to \omega_C(-S) \to \omega_{C_2}(-(S_2-P))\to 0 . \end{eqnarray*}
In particular it gives rise to the inequality
\begin{eqnarray}\label{spezzamento} h^0(C, K_C-S) \leq h^0(C_1, K_{C_1}-S_1)+ h^0(C_2, K_{C_2}-(S_2-P)) . \end{eqnarray}

On $C_2$ we may apply our  induction argument obtaining   $h^0(C_2, K_{C_2}-(S_2-P)) \leq  \frac12 \deg (K_{C_2}-(S_2-P))+1$.

\hfill\break
Let us consider $H^0(C_1, K_{C_1}-S_1)$. 
Counting dimensions we have either that $h^0(C_1, K_{C_1}-S_1)=h^0(C_1, K_{C_1}-S_1+P)-1$ or that $h^0(C_1, K_{C_1}-S_1)=h^0(C_1, K_{C_1}-(S_1-P))$.

In the first case  eq.  (\ref{spezzamento})  becomes
\begin{eqnarray*} h^0(C, K_C-S) \leq h^0(C_1, K_{C_1}-(S_1-P))-1+ h^0(C_2, K_{C_2}-(S_2-P)). \end{eqnarray*}
But  $S_i-P$ are subcanonical divisors on each subcurve, hence we may apply induction on $C_1$ and $C_2$ obtaining 
\begin{eqnarray*}h^0(C, K_C-S) &\leq& \frac12 \deg (K_{C_1}-(S_1-P))+1 -1 + \frac12 \deg (K_{C_2}-(S_2-P))+1\\
&=& \frac12 \deg (K_C-S)+1. 
\end{eqnarray*}
%i.e.,  the pair $(C,S)$ satisfies Clifford's inequality (\ref{cliff_ridotte}). \\
%
%\hfill\break
In the second case $H^0(C_1, K_{C_1}-S_1)=H^0(C_1, K_{C_1}-(S_1-P))$ and  in particular  also  $S_1$ is   subcanonical on $C_1$. Therefore we may apply induction on eq.
(\ref{spezzamento}) obtaining 
\begin{eqnarray*}h^0(C, K_C-S)
& \leq& h^0(C_1, K_{C_1}-S_1)+ h^0(C_2, K_{C_2}-(S_2-P)) \\
&\leq& \frac12 \deg (K_{C_1}-S_1)+1  + \frac12 \deg (K_{C_2}-(S_2-P))+1\\
&=& \frac12 \deg (K_C-S)+\frac32.
\end{eqnarray*}
To conclude the proof it is enough to show that the above inequality is strict. 
We argue by contradiction. Assume that  $h^0(C, K_C-S) =  \frac12 \deg (K_C-S)+\frac32$. Then necessarily  
$$
\begin{array}{l}
h^0(C_1, K_{C_1}-S_1)=\frac12 \deg (K_{C_1}-S_1)+1\ ; \\ 
h^0(C_2, K_{C_2}-(S_2-P))=\frac12 \deg (K_{C_2}-(S_2-P))+1.
\end{array}
$$ 
In particular $\deg S_1$ must be even and $\deg S_2$ must be odd.
But we may switch the roles of $C_1$ and $C_2$ and conclude that $\deg S_2$ is even and $\deg S_1$ is odd, which is clearly a contradiction.
\end{proof}

\begin{REM} The above result can be extended to non reduced curves, under suitable assumptions. Indeed the above  theorem holds for 2-connected curves, whilst in the  2-disconnected
case the key point   of the proof  is the   existence of a a decomposition  $C=C_1 \cup C_2$ with $C_1.C_2=1$ 
such that:
\begin{itemize}
\item[(a)] $C_1$ and $C_2$ satisfy Clifford's inequality;%  since they are connected reduced curve; %, thus we may use induction.
\item[(b)]  $P= C_1 \cap C_2$ is a base point for $|K_C|$ and $P$ is a smooth point on $C_i$.
\end{itemize}

In order to use point (a) we do not really need that $C_i$ are reduced, just that they satisfy Clifford's inequality for some reason. E.g. 2-connected (possibly nonreduced) curves are perfectly fine.

In order to deal with  point (b)  the key fact is that by Theorem \ref{thm:curve}   $P$ is a base point for $|\omega_C|$ if and only if there exist a subcurve 
$B \subset C$ such that $\omega_{C|B}\iso {\omega}_{B}(P)$ and $P$ is smooth for $B$.
\end{REM}

\section{Green's conjecture for  certain classes of   reduced  $m$-connected curves}\label{green}

Let $C$  be a reduced  curve, 
let $\sH$ be an  invertible sheaf on $C$ and let $W \subseteq
H^{0}(C,\sH)$ be a subspace which yields a base point free system of
projective dimension $r.$

The Koszul groups $\sK_{p,q}(C,\sH,  W)$ are defined
as the cohomology at the middle of the complex
\[
\stackrel{p+1}{\bigwedge} W \otimes H^{0}(\sH^{q-1})  \longrightarrow
\stackrel{p}{\bigwedge} W \otimes H^{0}(\sH^{q})  \longrightarrow
\stackrel{p-1}{\bigwedge} W \otimes H^{0}(\sH^{q+1})
 \]
If $W=H^{0}(C,\sH) $ they are usually denoted by
$\sK_{p,q}(C, \sH)$
(see \cite{Gr} for  the  definition and main results).
The groups $\sK_{p,q}(C, \sH)$ play a significant role if $\sH$ is very ample and  normally generated  
 since in this case 
$\sK_{p,q}(C, \sH)\otimes \Oh_{\proj^n}(-p-q)$ are the terms of the resolution of the ideal sheaf    of the embedded curve (see \cite[Thm. 2.a.15]{Gr}).

If $C$ is a Gorenstein curves with planar singularities,  3-connected and not (honestly) hyperelliptic then  by 
 \cite[Thm. 3.3]{FrTe2}  $\omega_C$ is  very ample and normally generated. Therefore 
    it is worth studying  the Koszul groups $\sK_{p,q}(C, \omega_C)$. Indeed we have  the following result.

 \begin{TEO}\label{noether} Let $C$ be  a Gorenstein curve of aritmetic genus $p_a(C) \geq 3$,  with planar singularities, 3-connected and  not honestly hyperelliptic.   Then
\begin{itemize}
\item 
  $ \sK_{0,q}(C,\omega_C)=0$ for all $q>0$, i.e., $\omega_C$ is normally generated; 
\item
$\sK_{p,q}( C, \omega_C) =0$ if  $q\geq 4$;
\item
$\sK_{p,3}( C, \omega_C) \iso \C $ if  $p=g-2$, and  $\sK_{p,3}( C, \omega_C)=0$ if $p\neq g-2$;
\item
$\sK_{p,1}( C, \omega_C)^{\vee} \iso 
\sK_{g-p-2,2}(C, \omega_C) $;
\item $\sK_{p,1}( C, \omega_C)=0 \Rightarrow \sK_{p',1}( C, \omega_C)=0 \ \forall p'\geq p$;
\item $\sK_{p,2}( C, \omega_C)=0 \Rightarrow \sK_{p',2}( C, \omega_C)=0 \ \forall p'\leq p$.
\end{itemize}
\end{TEO}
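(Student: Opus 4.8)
The plan is to reduce the entire statement to two inputs: the normal generation of $\omega_C$ furnished by \cite[Thm. 3.3]{FrTe2}, and a duality isomorphism for Koszul cohomology in the style of \cite{Gr}, which I would run directly in the Gorenstein setting. The first bullet is then immediate: since $\omega_C$ is very ample and normally generated, and $\sK_{0,q}(C,\omega_C)$ is by definition the cokernel of the multiplication map $H^0(\omega_C)\otimes H^0(\omega_C^{q-1})\to H^0(\omega_C^q)$, normal generation gives $\sK_{0,q}(C,\omega_C)=0$ for every $q>0$.

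The heart of the argument is the duality isomorphism
$$\sK_{p,q}(C,\omega_C)^\vee\iso\sK_{g-2-p,\,3-q}(C,\omega_C),\qquad g:=p_a(C).$$
To obtain it I would follow Green's proof of the duality theorem, checking that its only analytic inputs survive on a Gorenstein curve: namely Serre duality, which holds with dualizing sheaf $\omega_C$, together with the vanishings $h^1(C,\omega_C^q)=0$ for $q\geq 2$ and the value $h^1(C,\omega_C)=1$. The vanishings follow from Serre duality and $H^0(C,\omega_C^{1-q})=0$ for $q\geq 2$, the latter because $\omega_C$ has positive degree on every component (it is very ample and $C\not\iso\proj^1$), so a negative power has no sections, while $h^1(\omega_C)=1$ is Theorem \ref{thm:curve}(i). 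Once the duality is in place the three ``closed-form'' bullets drop out by specialising $q$: for $q\geq 4$ the right-hand group has negative second index and the middle term $H^0(\omega_C^{3-q})$ of its defining complex vanishes, giving $\sK_{p,q}=0$; for $q=3$ one gets $\sK_{p,3}^\vee\iso\sK_{g-2-p,0}$, and inspection of the complex $\wedge^{j+1}W\to\wedge^j W\otimes H^0(\Oh_C)\to\wedge^{j-1}W\otimes H^0(\omega_C)$ shows $\sK_{j,0}\iso\C$ for $j=0$ and $\sK_{j,0}=0$ otherwise, whence $\sK_{p,3}\iso\C$ exactly when $p=g-2$; and $q=1$ is precisely the fourth bullet.

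For the two monotonicity statements I would argue as follows. The fifth bullet, $\sK_{p,1}(C,\omega_C)=0\Rightarrow\sK_{p',1}(C,\omega_C)=0$ for $p'\geq p$, asserts that the linear strand of the resolution of the canonical ring has no internal gap; under normal generation this is a standard monotonicity property, which I would deduce from the kernel bundle $M_{\omega_C}$ defined by $0\to M_{\omega_C}\to H^0(\omega_C)\otimes\Oh_C\to\omega_C\to 0$ and the associated Koszul-type exact sequences, or simply cite from \cite{Gr}. The sixth bullet then follows formally from the fifth by the duality already proved: since $\sK_{p,2}(C,\omega_C)^\vee\iso\sK_{g-2-p,1}(C,\omega_C)$, vanishing of $\sK_{p,2}$ is equivalent to vanishing of $\sK_{g-2-p,1}$, and applying the fifth bullet with threshold $g-2-p$ converts the condition ``for all $p'\leq p$'' into ``for all indices $\geq g-2-p$.''

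The main obstacle is the second paragraph: justifying that Green's duality theorem, stated in \cite{Gr} for smooth curves, remains valid for our Gorenstein curves with planar singularities. I expect no essential difficulty, since the proof is formal once Serre duality and the listed cohomology vanishings are available; but the verification that every cohomological input used by Green survives on a singular Gorenstein curve, and in particular that the dualizing sheaf $\omega_C$ plays the role of the canonical divisor throughout, is the step that must be carried out with care.
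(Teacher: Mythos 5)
Your proposal is correct and follows essentially the same route as the paper: normal generation from \cite[Thm. 3.3]{FrTe2} for the first bullet, Koszul duality $\sK_{p,q}(C,\omega_C)^{\vee}\iso\sK_{g-2-p,3-q}(C,\omega_C)$ plus the standard vanishing-propagation in the linear strand for the rest. The one step you flag as the main obstacle --- verifying that Green's duality survives on a Gorenstein curve with $\omega_C$ as dualizing sheaf --- is exactly what the paper outsources to \cite[Prop. 1.4]{Fr}, where this duality is already established in the singular setting, so you need not redo Green's argument by hand.
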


\begin{proof} $ \sK_{0,q}(C,\omega_C)=0$ for all $q>0$ follows by \cite[Thm. 3.3]{FrTe2}. The remaining statements  follows
from the same the arguments used for smooth curves (see \cite[Thm. 4.3.1]{Gr}) 
and by  the duality  results  given in   \cite[Prop. 1.4]{Fr}.

\end{proof}

Taking Definition \ref{CliffC} for a generalisation of the usual Clifford index,  Green's Conjecture (\cite[Conjecture 5.1]{Gr} can be
formulated without changes, i.e., given a 4-connected not  hyperelliptic Gorenstein curve  $C$ then one may ask if 

%\begin{CONJ}[Green's conjecture for reduced curves] Let $C$ be a 4- connected Gorenstein reduced curve of genus $p_a(C) \geq 3$. Then
\[ \sK_{p,1}(C, \omega_C)= 0  \stackrel{ ? }{ \Longleftrightarrow}  p \geq p_a(C) - \Cliff(C)-1.\]
%\end{CONJ}
%
First of all notice that, as in the smooth case,  we have a non vanishing result, and hence an upper bound  on the Clifford index of a curve $C$:
%by taking the Koszul groups of its canonical ring. 

\begin{PROP}[Green-Lazarsfeld]\label{nonvanishing} Let $C=C_1\cup\cdots \cup C_n $  be a  4-connected, not honestly hyperelliptic,  reduced curve with planar singularities.  Assume $C_i \neq \proj^1$ for every $i=1,\cdots, n$.

%  and let 
%$\sF$ be  an invertible sheaf which contributes to $\Cliff(C)$.  
Then 
$$  p \leq p_a(C) - \Cliff(C) -2  \Longrightarrow \sK_{p,1}(C, \omega_C) \neq 0 $$
%In particular, if there 
% exists a decomposition $C=A\cup B$ such that  $p_a(A) \geq 1$, $p_a(B)\geq 1$, and $A\cdot B =m $, 
%then %$\Cliff(C) \leq m-2 $ and 
%$ \sK_{p_a(C)-m,1}(C,\omega_C)\neq 0$. ????????????
\end{PROP}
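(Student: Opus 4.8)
The plan is to follow the classical Green--Lazarsfeld non-vanishing argument, adapted to torsion free sheaves on $C$. First I would reduce the statement to a single extremal non-vanishing. By the monotonicity property in Theorem \ref{noether} ($\sK_{p,1}(C,\omega_C)=0\Rightarrow\sK_{p',1}(C,\omega_C)=0$ for all $p'\geq p$), taking contrapositives shows that a nonzero class at the top index $p_0:=p_a(C)-\Cliff(C)-2$ forces $\sK_{p,1}(C,\omega_C)\neq 0$ for every $p\leq p_0$, which is exactly the assertion. Hence it suffices to prove
$$\sK_{p_0,1}(C,\omega_C)\neq 0,\qquad p_0=p_a(C)-\Cliff(C)-2.$$

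Next I would set up the two linear systems realising the Clifford index. By Theorems \ref{cliff=} and \ref{4-connected} there is a rank $1$ torsion free sheaf $\sF$, which we may take of the form $\sI_S\omega_C$, with $h^0(C,\sF)\geq 2$, $h^1(C,\sF)\geq 2$ and $\Cliff(\sF)=\Cliff(C)$. Let $\sG:=\sHom(\sF,\omega_C)$ be its Serre dual; then $h^0(C,\sG)=h^1(C,\sF)\geq 2$, one has $\deg\sF+\deg\sG=\deg\omega_C$, and $\Cliff(\sG)=\Cliff(\sF)$ by Serre duality. Writing $a=h^0(C,\sF)$ and $b=h^0(C,\sG)$, Riemann--Roch for rank $1$ torsion free sheaves together with $\Cliff(\sF)=\deg\sF-2a+2$ yields $a+b-3=p_a(C)-\Cliff(C)-2=p_0$. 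The evaluation pairing $\sF\otimes\sG\to\omega_C$ induces a multiplication map
$$\mu\colon H^0(C,\sF)\otimes H^0(C,\sG)\longrightarrow H^0(C,\omega_C),$$
and the products $\mu(s\otimes t)$ are the sections out of which the Koszul class will be assembled.

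I would then construct the Green--Lazarsfeld class exactly as in the smooth case: from bases of $H^0(C,\sF)$ and $H^0(C,\sG)$ one assembles, out of the products $\mu(s_i\otimes t_j)$, a Koszul cycle representing a class in $\sK_{a+b-3,1}(C,\omega_C)=\sK_{p_0,1}(C,\omega_C)$. Since $C$ is $4$-connected and not honestly hyperelliptic, $\omega_C$ is very ample and normally generated (Theorem \ref{noether}), so $C$ is projectively normal and the Koszul formalism applies without change.

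The main obstacle is to show that this class is nonzero. In the smooth situation non-triviality rests on the base point freeness of the two factors --- which one arranges by minimality of the Clifford index, since deleting a base point lowers $\Cliff$ --- followed by the base point free pencil trick, i.e. the injectivity of a suitable wedge of $\mu$. The delicate point here is to carry this out for a possibly non-locally free $\sF$ on a singular curve: I would first reduce to the case in which $\sF$ and $\sG$ define base point free systems (no smooth point can be a base point without contradicting minimality, and the behaviour at the planar singularities is controlled by the hypotheses $C_i\neq\proj^1$ and $4$-connectedness), and then check that the pencil trick computation, being local on $C$, survives the passage to torsion free sheaves. This transfer of the non-vanishing to the torsion free setting is where the real work lies; the index bookkeeping and the reduction to $p_0$ are formal.
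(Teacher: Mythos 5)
Your proposal follows essentially the same route as the paper: reduce to the extremal index $p_0=p_a(C)-\Cliff(C)-2$ via the monotonicity statements in Theorem \ref{noether}, use Theorem \ref{cliff=} to realise the Clifford index by a subcanonical cluster $S$, pair $\sI_S\omega_C$ with its Serre dual (the residual cluster $S^{\ast}$), verify the numerical identity $a+b-3=p_0$, and run the Green--Lazarsfeld construction; all of this matches the paper, and your bookkeeping is correct. The one place you part company with the paper is the non-vanishing mechanism, and there you have made the problem look harder than it is. The Green--Lazarsfeld appendix argument does not rest on base point freeness of the two systems or on the base point free pencil trick; it is a linear-algebra argument carried out on the two subspaces $W_1=\im\{H^0(\sI_S\omega_C)\into H^0(\omega_C)\}$ and $W_2=\im\{H^0(\sI_{S^{\ast}}\omega_C)\into H^0(\omega_C)\}$ and their annihilators $\Ann(W_i)\subset H^0(C,\omega_C)^\vee$, which is exactly how the paper sets it up. Since the multiplication $H^0(\sI_S\omega_C)\otimes H^0(\sI_{S^{\ast}}\omega_C)\to H^0(\omega_C)$ is furnished by the evaluation pairing $\sF\otimes\sHom(\sF,\omega_C)\to\omega_C$, and normal generation of $\omega_C$ (Theorem \ref{noether}) makes the Koszul formalism available, the argument transfers verbatim with no local analysis of $\sF$ at the planar singularities. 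So the step you flag as ``where the real work lies'' is in fact the formal part, provided you quote the annihilator form of the Green--Lazarsfeld argument rather than the pencil-trick form; as written, your plan leaves that step genuinely open.
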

\begin{proof} By Theorem  \ref{4-connected} it is   $\Cliff(C)\geq 0$  and by Theorem \ref{noether}   $\omega_C$  is normally generated. 
Now by Theorem  \ref{cliff=} there exists a proper subcanonical cluster $S$ such that  $ \sI_S \omega_C$ computes the Clifford index of $C$. 

Let $S^{\ast}$ be its  residual Cluster with respect to a generic invertible section $s_0 \in H^0(C, \omega_C)$.
By  definition  it is
$ \sI_{S^{\ast}} \omega_C \iso  \sHom(\sI_S \om_C, \om_C) $ and by Serre duality it is  $H^1(C,\sI_S K_C) \dual H^0(C, \sI_{S^{\ast}} K_C)$, so  that 
$\cliff(\sI_S K_C)=\cliff(\sI_{S^{\ast}} K_C)$.  Moreover,
denoting by
$\Lambda := div(s_0)$  the  effective divisor corresponding to $s_0$ we
 have  the following exact sequence
$$ 0 \to  \Oh_C \iso \sI_{\Lambda} \om_C \to \sI_S \omega_C \to    \Oh_{S^{\ast}}  \to 0 $$
 (see  \cite[\S 2]{FrTe1} for details).

Therefore we can  consider $ \proj(H^0(\sI_S \omega_C))$ as a  $g^r_d$, where $d=\deg \sI_S \omega_C$ and $h^0(\sI_S \omega_C)=r+1$
and $\proj(H^0(\sI_{S^{\ast}} \omega_C))$   as the residual  $g^{r'}_{d'}$, where $d'=\deg \sI_{S^{\ast}} \omega_C $ and $h^0(\sI_{S^{\ast}}  \omega_C)=r'+1$.
Setting 
 $$  W_1 = \im \{ H^0(\sI_S \omega_C) \into H^0(\omega_C)\} , \ \  W_2 = \im \{ H^0(\sI_{S^{\ast}} \omega_C) \into H^0(\omega_C)\}$$  
 and 
 $$\bar{D_1}= \Ann (W_1) \subset H^0(C,\omega_C)^\vee, \ \  \bar{D_2}= \Ann (W_2) \subset H^0(C,\omega_C)^\vee $$
 we can repeat 
verbatim the argument adopted by Green and Lazarsfeld in \cite[Appendix]{Gr}   
obtaining $\sK_{ r+r' -1,1}(C,\omega_C)\neq 0.$

To conclude it is enough to see that $r+r' -1 = p_a(C) - \Cliff(C) -2$ since 
  $d'= 2p_a(C)-2 -d$ and $\cliff(\sI_S \omega_C)=\cliff(\sI_{S^{\ast}} \omega_C)=\cliff(C)$.
  
The non vanishing of $\sK_{ p,1}(C,\omega_C)$ for  every $p < p_a(C) - \Cliff(C) -2$  follows from  Theorem \ref{noether}.

\end{proof}

\begin{COR}\label{nonvanishing2 }
Let $C=C_1\cup\cdots \cup C_n $  be a  connected  reduced curve with planar singularities.
  Assume $C_i \neq \proj^1$ for every $i=1,\cdots, n$.

If $C$ is  $m$-connected but  $(m+1)$-disconnected
then $\Cliff(C) \leq m-2$ and $\sK_{p,1}(C, \omega_C)\neq  0   $ if  $ p \leq  p_a(C)  - m.$

\end{COR}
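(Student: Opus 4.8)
The first assertion requires nothing new: the hypotheses here ($C_i\neq\proj^1$ for all $i$, and $C$ being $m$-connected but $(m+1)$-disconnected) are exactly those of Theorem \ref{prop:Clifford}, which yields $\Cliff(C)\leq\min\{m-2,\,[\frac{p_a(C)-1}{2}]\}\leq m-2$. So the whole content of the corollary is the non-vanishing of $\sK_{p,1}(C,\omega_C)$ for $p\leq p_a(C)-m$, and my plan is to produce one explicit invertible sheaf of the right Clifford index and run the Green--Lazarsfeld machinery on it, just as in the proof of Proposition \ref{nonvanishing}.

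First I would fix a decomposition $C=A\cup B$ with $A\cdot B=m$ witnessing the $(m+1)$-disconnectedness. Since no component is $\proj^1$, every $C_i$ has $p_a(C_i)\geq 1$, and hence by the genus formula (\ref{genere A+B}) every connected subcurve, in particular $A$ and $B$, has positive arithmetic genus; thus Proposition \ref{decomposition} applies and produces an invertible sheaf $\sF=\Oh_C(\Delta)$ with $h^0(\sF)\geq 2$, $h^1(\sF)\geq 2$ and $\Cliff(\sF)=m-2$. Next I would take its residual $\sHom(\sF,\omega_C)$ with respect to a generic section of $\omega_C$ and repeat verbatim the Green--Lazarsfeld argument of \cite[Appendix]{Gr}, using the pencils cut out by $\sF$ and by its residual. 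Writing $h^0(\sF)=r+1$ and $h^0(\sHom(\sF,\omega_C))=r'+1$, Serre duality gives $\deg\sHom(\sF,\omega_C)=2p_a(C)-2-\deg\sF$ and $\Cliff(\sHom(\sF,\omega_C))=\Cliff(\sF)$, so a direct computation yields $r+r'-1=p_a(C)-\Cliff(\sF)-2=p_a(C)-m$. This gives $\sK_{p_a(C)-m,\,1}(C,\omega_C)\neq 0$, and Theorem \ref{noether} (non-vanishing of $\sK_{p,1}$ propagates to all smaller $p$) upgrades it to $\sK_{p,1}(C,\omega_C)\neq 0$ for every $p\leq p_a(C)-m$. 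Equivalently, once $\Cliff(C)\leq m-2$ is in hand, for $4$-connected $C$ one may simply invoke Proposition \ref{nonvanishing} along the chain $p\leq p_a(C)-m\leq p_a(C)-\Cliff(C)-2$.

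The step I expect to demand the most care is guaranteeing that the Green--Lazarsfeld non-vanishing genuinely applies, that is, that $\omega_C$ is normally generated so that $\sK_{p,1}(C,\omega_C)$ carries its intended meaning and Theorem \ref{noether} is available; this holds as soon as $C$ is $3$-connected and not honestly hyperelliptic, by \cite[Thm. 3.3]{FrTe2}. For $m\geq 4$ the $m$-connectedness already forces $4$-connectedness and the full hypotheses of Proposition \ref{nonvanishing}, so there the result is immediate; the borderline values $m\leq 3$ and the honestly hyperelliptic case are exactly where one cannot appeal to a sheaf computing $\Cliff(C)$ and must instead lean on the explicit sheaf $\sF$ of Proposition \ref{decomposition}, which is the reason I would organise the argument around $\sF$ rather than around the minimal-Clifford sheaf of Theorem \ref{cliff=}.
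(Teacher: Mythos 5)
The paper offers no written proof of this corollary: it is intended as the immediate concatenation of Theorem \ref{prop:Clifford}, which gives $\Cliff(C)\le m-2$, with Proposition \ref{nonvanishing}, which then gives $\sK_{p,1}(C,\omega_C)\neq 0$ for all $p\le p_a(C)-\Cliff(C)-2$, a range that contains every $p\le p_a(C)-m$. Your argument reproduces exactly this chain when $m\ge 4$, so there it coincides with the paper's. Where you genuinely diverge is in the cases $m\le 3$: rather than invoking Proposition \ref{nonvanishing} (whose hypotheses of $4$-connectedness and non-hyperellipticity are not granted by the corollary), you feed the explicit invertible sheaf $\sF=\Oh_C(\Delta)$ of Proposition \ref{decomposition} into the Green--Lazarsfeld construction and compute $r+r'-1=p_a(C)-\Cliff(\sF)-2=p_a(C)-m$ directly. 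This is a real improvement, since it decouples the non-vanishing from Theorem \ref{cliff=} and Theorem \ref{4-connected}, and your numerics are correct.

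That said, one gap survives your reorganisation. The Green--Lazarsfeld step only produces a non-zero class in $\sK_{p_a(C)-m,1}(C,\omega_C)$; to descend to all $p\le p_a(C)-m$ you (like the paper in the proof of Proposition \ref{nonvanishing}) appeal to the propagation statement of Theorem \ref{noether}, which is proved only for $3$-connected, not honestly hyperelliptic curves. Hence for $m\le 2$, and for honestly hyperelliptic $C$, the descent is not justified by anything available in the paper ($\omega_C$ need not even be very ample when $m\le 2$, so normal generation and the duality used in Theorem \ref{noether} are not at your disposal). To close this you would need either to add the hypotheses ``$m\ge 3$ and $C$ not honestly hyperelliptic'' (which is what the corollary implicitly assumes), or to rerun the Green--Lazarsfeld construction on subsystems $W_1'\subseteq W_1$ and $W_2'\subseteq W_2$ of every intermediate dimension so as to produce a non-zero class in $\sK_{p,1}(C,\omega_C)$ for each value $p\le p_a(C)-m$ directly, without any propagation lemma.
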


To show that our notion of Clifford index has a geometrical meaning we show that Green's conjecture holds 
 in the particular case of a stable curve consisting of two smooth components intersecting in $m$ distinct points.

 \begin{TEO}\label{green2}
 Let $g_1,  g_2, m$ be integers such that $ 4 \leq m \leq   \frac{g_1 +1}{2}  $ and $g_2\geq 1$.
 
 Let $C=C_1\cup C_2$ be a stable curve given by the union of  an irreducible  smooth  general
  curve  $C_1$  of genus $g_1$  and an irreducible smooth curve $C_2$   of genus $g_2 $,
 meeting transversally in $m$ distinct points
 $\{x_1, \cdots, x_m\}$.  
Then %$Cliff(C)=m-2$ and Green's conjecture holds for $C$.%, i.e.,  $\sK_{p,1}(C,\omega_C)=0$ iff $p\geq p_a(C)-\Cliff(C)-1$.
 $$\Cliff(C) = m-2 \ \  \mbox{ and }  \ \   \sK_{p,1}(C, \omega_C)= 0   \Longleftrightarrow  p \geq  p_a(C)  - \Cliff(C)-1.$$
 \end{TEO}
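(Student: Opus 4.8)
The plan is to treat the two assertions separately: first pin down $\Cliff(C)=m-2$, then deduce Green's conjecture from it by isolating a single Koszul vanishing.

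For the Clifford index I first record the numerical data. Since $C_1,C_2$ are irreducible, the only proper decomposition is $C=C_1\cup C_2$, with $C_1\cdot C_2=m$; hence $C$ is $m$-connected but $(m+1)$-disconnected, $p_a(C)=g_1+g_2+m-1$, and neither component is $\proj^1$. The upper bound $\Cliff(C)\le m-2$ is then immediate from Proposition \ref{decomposition} (or Theorem \ref{prop:Clifford}), once I check $m-2\le[\tfrac{p_a(C)-1}{2}]$, which follows from $m\le\tfrac{g_1+1}{2}$ and $g_2\ge1$. For the reverse inequality I take a sheaf $\sF=\sI_S\omega_C$ realizing $\Cliff(C)$ (Theorem \ref{cliff=}), restrict it to the two smooth components to get line bundles $L_i=\sF_{|C_i}$, and run the Mayer--Vietoris sequence $0\to\sF\to L_1\oplus L_2\to\bigoplus_{i=1}^m\Ka_{x_i}\to0$. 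Writing $\rho$ for the rank of $H^0(L_1)\oplus H^0(L_2)\to\Ka^m$, this yields $\Cliff(\sF)=\Cliff_{C_1}(L_1)+\Cliff_{C_2}(L_2)+2\rho-2$. I then bound the three terms below: Clifford's theorem on the smooth curve $C_2$ (passing to the Serre-dual bundle when the degree exceeds $2g_2-2$), the genericity of $C_1$ — whose Clifford index is $[\tfrac{g_1-1}{2}]\ge m-1$, so any honest contributing system on $C_1$ is expensive — and the bookkeeping of the $m$ gluing conditions controlling $\rho$. The point is that $m\le\tfrac{g_1+1}{2}$ forces every configuration other than the one of Proposition \ref{decomposition} to cost at least $m-2$, giving $\Cliff(C)\ge m-2$.

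For Green's conjecture the non-vanishing half is already available: since $\Cliff(C)=m-2\ge0$ and $\omega_C$ is normally generated (Theorem \ref{noether}), Proposition \ref{nonvanishing} gives $\sK_{p,1}(C,\omega_C)\ne0$ for all $p\le p_a(C)-m$. It remains to prove the vanishing $\sK_{p,1}(C,\omega_C)=0$ for $p\ge p_a(C)-m+1$. By the monotonicity and duality statements of Theorem \ref{noether} this reduces to a single boundary group: it suffices to show $\sK_{p_a(C)-m+1,\,1}(C,\omega_C)=0$, equivalently $\sK_{m-3,2}(C,\omega_C)=0$.

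This last vanishing is the main obstacle, and it is where the results of Voisin \cite{V1,V2} and Aprodu \cite{ap02} enter. The strategy is to view $C$ as the central fibre of a flat family whose general fibre is a smooth curve $X_t$ of genus $g=p_a(C)$ carrying a pencil of degree $m$ specializing the admissible $g^1_m$ attached to the $m$ nodes; because $C_1$ is general, $X_t$ is a general $m$-gonal curve, for which Aprodu's theorem (building on Voisin's solution of the generic Green conjecture) verifies the linear-growth condition on the loci $W^1_{m+\bullet}$ and hence Green's conjecture. The delicate step is transferring the vanishing to the reducible central fibre $C$: since dimensions of Koszul groups are only upper semicontinuous, vanishing on $X_t$ does not formally descend to $C$. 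The plan is to handle this either by checking that the relevant Koszul cohomology has constant dimension along the family — the gonality, and with it the Green--Lazarsfeld non-vanishing, being locally constant because the $g^1_m$ persists — or, preferably, by applying the nodal/limit version of Aprodu's criterion directly to $C$, the linear growth of the limit Brill--Noether loci again being guaranteed by the genericity of $C_1$ together with $m\le\tfrac{g_1+1}{2}$. Granting $\sK_{m-3,2}(C,\omega_C)=0$, combining it with the Green--Lazarsfeld non-vanishing closes the equivalence and proves the theorem.
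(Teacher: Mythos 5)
Your reduction of the vanishing half to the single group $\sK_{m-3,2}(C,\omega_C)=0$ is fine, but the way you propose to establish that vanishing has a genuine gap, and you have in fact identified it yourself: upper semicontinuity of Koszul dimensions lets $\dim\sK_{p,q}$ jump \emph{up} at the special fibre, so vanishing on the smooth general fibre $X_t$ says nothing about the reducible central fibre $C$. Neither of your proposed repairs closes this. The Green--Lazarsfeld non-vanishing only pins down the dimension from below in the range $p\le p_a(C)-\Cliff-2$; it gives no control on the group $\sK_{m-3,2}$ you actually need, so ``constant dimension along the family'' is an assertion, not an argument. As for a ``nodal/limit version of Aprodu's criterion applied directly to $C$'': Aprodu's degeneration technique goes in the \emph{opposite} direction (prove vanishing on a special irreducible nodal curve, then conclude for the general smooth curve by semicontinuity), and his linear-growth criterion on $W^1_{m+\bullet}$ is not formulated for a reducible stable curve with two components; making sense of it here would require compactified Jacobians and is nowhere near automatic. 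Your sketch of the lower bound $\Cliff(C)\ge m-2$ via Mayer--Vietoris on a minimizing $\sI_S\omega_C$ is likewise only a sketch (the bookkeeping of $\rho$ and the case analysis on $(L_1,L_2)$ is exactly where the difficulty sits), though this part could probably be completed with effort.

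The paper avoids both problems by never leaving the curve $C$. It takes the restriction sequence $0\to\Oh_{C_2}(-C_1)\to\Oh_C\to\Oh_{C_1}\to 0$, twists by powers of $\omega_C$ to get a short exact sequence of graded $S(W)$-modules ($W=H^0(C,\omega_C)$), and reads off a long exact sequence of Koszul groups. The two outer terms are then killed separately: on $C_2$ by duality plus Green's vanishing theorem (4.a.1) for the degree-$(2g_2-2+m)$ bundle $\omega_{C|C_2}$, and on $C_1$ by Voisin's generic Green conjecture combined with Aprodu's theorem on adjoint bundles applied to $\omega_{C|C_1}\iso\omega_{C_1}(x_1+\cdots+x_m)$ --- i.e.\ Voisin and Aprodu are invoked only on the \emph{smooth component} $C_1$, where they legitimately apply, and the hypothesis $m\le\frac{g_1+1}{2}$ enters precisely to make Aprodu's bound land at $p\ge g_1+g_2$. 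Finally, the lower bound $\Cliff(C)\ge m-2$ is not proved directly at all: it falls out of the Koszul vanishing via the contrapositive of the Green--Lazarsfeld non-vanishing (Proposition \ref{nonvanishing}), while Theorem \ref{prop:Clifford} gives $\Cliff(C)\le m-2$. If you replace your degeneration step by this on-the-curve long exact sequence, the rest of your outline goes through.
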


\begin{proof} Since $ p_a(C) = g_1+g_2 +m-1$, 
 %We have $\Cliff(C) \leq m-2$ by Prop. \ref{prop:Clifford} and furthermore  $ \sK_{p_a(C)-m,1}(C,\omega_C)\neq 0$ by Corollary \ref{K_p-decomposition}.
the theorem follows if we prove that %$\sK_{p,2}(\omega_C)=0$ for $p= m-3$, or equivalently,
$\sK_{p,1}(\omega_C)=0$ if and only if   $p \geq  p_a(C) - m+1 =g_1+g_2$.

\hfill\break First of all  notice that by Thm.  \ref{thm:curve}  the linear  system $|\omega_C|$  yields an embedding $\varphi: C \into \proj^{p_a(C)-1}$  such that  $\varphi (C)$ is the union of two curves 
of genus $g_1$ (resp. $g_2$) and degree $2g_1-2 +m$ (resp. $2g_2-2 +m$) 
 intersecting in $m$  points $\{  \varphi(x_1), \cdots , \varphi(x_m) \} $.
 % Moreover such points    span a $(m-2)$-dimensional subspace
%since for $i=1,2$
%$${\omega_C}_{| C_i} \iso {\omega}_{C_i} \otimes \Oh_{C_i}(x_1 + \cdots + x_m)$$
%and the map $H^0(C, \omega_C) \to \Oh_{(x_1 \cup  \cdots \cup  x_m)}$ has rank $m-1$ because  its kernel is isomorphic to
%$H^0(C_1, {\omega}_{C_1}) \oplus H^0(C_2, {\omega}_{C_2}) $.

Now consider 
the standard exact sequence
$$ 0 \to \Oh_{C_2} (-C_1 )\to\Oh_C \to
  \Oh_{C_1}\to 0 .$$
Twisting with $\omega_C^{\otimes q}$ and   taking cohomology we get  the following exact sequence of $S(H^0(C, \omega_C))$-modules 
\begin{equation}\label{long-koszul}
 0 \to \bigoplus_{q\geq 0} H^0(C_2, {\omega_C}_{| C_2}^{\otimes q}  (-C_1) )\to\bigoplus_{q\geq 0} H^0(C,\omega_C^{\otimes q} ) \to
  \bigoplus_{q\geq 0 } H^0(C_1, {\omega_C}_{| C_1}^{\otimes q} )) \to 0
  \end{equation}
  where the  maps preserve the grading. 
  
Let $W := H^0(C,\omega_C)$. 
  We emphasize that $\varphi(C_1)$ and    $\varphi(C_2)$ are embedded as   degenerate curves in  $ \proj (W^\vee)$,
   but we can still consider  every terms   above  as $S(W)$-modules. 
    Therefore we will use the notation   $ {\sK}_{p,q}\big(  - , - , W \big)$ 
   to point out  that we are  finding  the resolution of the ideal of such degenerate curves % , and hence we will consider  Koszul groups    of  $S(W)$-modules
    (see  \cite[Proof of  Thm. (3.b.7)]{Gr} for a similar argument). 
     
By  \cite[Corollary 1.4.d, Thm. 3.b.1 ]{Gr}  we have the following exact sequence of Koszul groups :

\begin{eqnarray}\nonumber
\cdots \to { \sK}_{p+1,0}\big(C_1,{\omega_C}_{| C_1}, W\big) \to
 { \sK}_{p,1}\big(C_2, \Oh_{C_2}(-C_1) , {\omega_C}_{| C_2}, W \big)  \to \\ \label{long-koszul2}  \to 
 {\sK}_{p,1}\big(C, \omega_C\big)  \to  
 {\sK}_{p,1}\big(C_1,{\omega_C}_{| C_1}, W\big)  \to \cdots  \phantom{aaaaaaaaaaaa}
 %\to  \\ \nonumber \to  { \sK}_{p-1,2}(C_2, \Oh_{C_2}(-C_1) , {\omega_C}_{| C_2}, W ) 
\end{eqnarray}
To deal with the above groups 
  we consider
 the splittings

$W = H^0(C, \omega_C)=H^0(C_2, {\omega_C}_{| C_2}) \bigoplus   U $  with $U \iso H^0(C_1, {\omega}_{C_1})$,

$W = H^0(C, \omega_C)=H^0(C_1,{\omega_C}_{| C_1}) \bigoplus   Z $  with $Z \iso H^0(C_2, {\omega}_{C_2})$.

\hfill\break
Setting $s = \max\{0, p-g_1 \}$,   $t = \max\{0, p-g_2 \}$ we have the following decompositions of the Koszul groups appearing in  the above exact sequence:

 $
\begin{array}{rl}
\hspace{ -6 mm}  {\sK}_{p,1}(C_2, \Oh_{C_2}(-C_1), {\omega_C}_{| C_2}, W \big)   & =  \  \  {\displaystyle \bigoplus_{s\leq p' \leq p}  \big[ \sK_{p',1}(C_2,\Oh_{C_2}(-C_1) , {\omega_C}_{| C_2} ) \   \otimes
      \stackrel{p-p'}{\bigwedge} U \big]} \\
        {\sK}_{p,1}(C_1,{\omega_C}_{| C_1}, W )    &  =  \  \  {\displaystyle  \bigoplus_{t\leq p'' \leq p}  \big[ \sK_{p'',1}(C_1, {\omega_C}_{| C_1})  \   \otimes
      \stackrel{p-p''}{\bigwedge} Z \big] }
      \end{array}$
Let us study at first 
${ \sK}_{p,1}(C_2, \Oh_{C_2}(-C_1) , {\omega_C}_{| C_2}, W ) $. 

%  $ {\widetilde{ \sK}}_{p,1}(C_2, \Oh_{C_2}(-C_1) , \omega_C ). $
Fix  $p' \leq p$. By duality  (cf. \cite[Prop. 1.4]{Fr}) and  the shift properties of $\sK_{p,q}$  \cite[(2.a.17)]{Gr}    we have the following isomorphisms
\begin{eqnarray*}\sK_{p',1}(C_2,\Oh_{C_2}(-C_1) , {\omega_C}_{| C_2} ) & \iso & \sK_{g_2+m-3-p',1}(C_2, {\omega}_{C_2} \otimes \Oh_{C_2}(C_1) , {\omega_C}_{| C_2} ) \\
& \iso & \sK_{g_2+m-3-p',2}(C_2 , {\omega_C}_{| C_2} ).
\end{eqnarray*}
 But    $\deg({\omega_C}_{|C_2}) = 2g_2-2 + m$. Whence by \cite[Theorem (4.a.1)]{Gr}
 %{\color{red}satisfies properties $N_k$} as long as $m-3 \geq  k$,  that is,
 $$  \sK_{g_2+m-3-p',2}(C_2 , {\omega_C}_{| C_2} )=0 \ \mbox{ if }  \ \ g_2+m-3-p' \leq m-3.   $$
 %Moreover by  a result of Green and Lazarsfeld (see \cite[Thm. 3.3]{GL2}) we have 
% $$\sK_{g_2+m-3-p',2}(C_2 , {\omega_C}_{| C_2} )\neq 0 \mbox{ if }  \ \ g_2+m-3-p' = m-2   $$
% since $C_1\cap C_2$ consists of  $m$ points spanning a $(m-2)$-dimensional subspace.
%
Therefore we get 
%since  $p-g_1 \leq p' \leq p$, we obtain
\begin{equation}\label{K_pC_2}
 { \sK}_{p,1}\big(C_2, \Oh_{C_2}(-C_1), {\omega_C}_{|C_2}, W \big)=0  \mbox{ if }   p \geq g_1 + g_2.
  \end{equation}

\hfill\break
  Now let us study $ {\sK}_{p,1}\big(C_1,{\omega_C}_{|C_1}, W \big). $

  By our assumption $C_1$ is a general curve  of  genus $g_1 \geq 2m -1$.  For a general curve  of genus $g_1$ we have $\Cliff(C_1) = [  \frac{g_1 - 1}{2}]$ 
  and 
by the results of Voisin on Green's conjecture for smooth curves with maximal Clifford index (\cite{V1}, \cite{V2}) we have $ \sK_{p,1}(C_1, {\omega}_{C_1})=0  \mbox{ if }  \ \ p \geq g_1 -  [  \frac{g_1 +1}{2}]$ .

 By our construction
  ${\omega_C}_{| C_1} \iso \omega_{C_1}(C_2) \iso \omega_{C_1}\otimes \Oh_{C_1}(x_1+\ldots +x_m)$, hence by the result of Aprodu on adjoint bundles 
  \cite[Thm. 3]{ap02}  we  get
$$ \sK_{p'',1}(C_1,\omega_C)=0  \mbox{ if }  \ \ p'' \geq   g_1 +  m -  [  \frac{g_1 + 1}{2}]$$
   and in particular 
\begin{equation}\label{K_pC_1}   {\sK}_{p,1}\big(C_1,{\omega_C}_{|C_1},  W \big) =0 \mbox{ if }  \ \ p \geq  g_2+g_1 + m - [ \frac{g_1 +1}{2}].
 \end{equation}
 %=  g_2+ g_1 +m -1 - [\frac{g_1+3}{2}]  $$
Therefore,   since $  m \leq  \frac{g_1 +1}{2} $ by our assumptions, we obtain %s (\ref{K_pC_2}) and  (\ref{K_pC_1}) yields 
$$ {\sK}_{p,1}\big(C_1,{\omega_C}_{|C_1},  W \big) =0 \mbox{ if }  \ \ p \geq  g_1+g_2. $$ 
Putting our vanishing results (\ref{K_pC_2}) and (\ref{K_pC_1}) into  the exact sequence  (\ref{long-koszul2})  we deduce that 
 $$ \sK_{p,1}(C, \omega_C)=0  \mbox{ if }  \ \  p \geq g_1 + g_2  = p_a(C)  - (m-2) -1 . $$
To conclude the proof notice  that the above vanishing result implies  $\Cliff(C) \geq m-2$ by  Proposition \ref{nonvanishing},   %. The same proposition yields 
%$\sK_{p,1}(\omega_C)\neq 0$ for $p \leq  p_a(C) - m =g_1+g_2 -1$ while 
     whereas we have $\Cliff(C) \leq m-2$ 
 by Thm. \ref{prop:Clifford}  because  $m-2 \leq   [\frac{p_a(C)-1}{2}]$ by our numerical assumptions.

 Therefore  it is 
$\Cliff(C) = m-2$ and  $\sK_{p,1}(C, \omega_C)= 0 $ if and only if  $ p \geq  p_a(C)  - \Cliff(C)-1.$

 \end{proof}

%\bibliographystyle{amsplain}   % this means that the order of references
			    % is dtermined by the order in which the
			    % \cite and \nocite commands appear
%\bibliography{biblio}

  \vspace{.5cm}
 \hfill\break 
Marco Franciosi\\
Dipartimento di Matematica, Universit\`a di Pisa\\
Largo B. Pontecorvo 5, I-56127 Pisa (Italy)\\
\tt{marco.franciosi@unipi.it}

\end{document}